\theoremstyle{plain}
\newtheorem{theorem}{Theorem}[section]
\newtheorem{proposition}[theorem]{Proposition}
\newtheorem{lemma}[theorem]{Lemma}
\newtheorem{corollary}[theorem]{Corollary}
\newtheorem{maintheorem}{Theorem}
\theoremstyle{definition}
\newtheorem{remark}[theorem]{Remark}
\newtheorem{example}[theorem]{Example}
\newtheorem{definition}[theorem]{Definition}
\newcommand{\vep}{\delta}
\newcommand{\cF}{{\mathcal F}}
\newcommand{\supp}{\operatorname{{supp}}}
\title[Quantitative recurrence for free semigroup actions]{Quantitative recurrence for free semigroup actions}
\begin{document}

\author[M. Carvalho]{Maria Carvalho}
\address{Centro de Matemática, Universidade do Porto, Portugal.}
\email{mpcarval@fc.up.pt}

\author[F. Rodrigues]{Fagner B. Rodrigues}
\address{Departamento de Matem\'atica, Universidade Federal do Rio Grande do Sul, Brazil.}
\email{fagnerbernardini@gmail.com}

\author[P.Varandas]{Paulo Varandas}
\address{Departamento de Matem\'atica, Universidade Federal da Bahia, Brazil.}
\email{paulo.varandas@ufba.br}

\keywords{Free semigroup action, sequential dynamics, Poincar\'e recurrence, ergodic optimization,
variational principle, random walk, skew-products.}

\subjclass[2000]{
Primary: 37B05, 
37B40 
Secondary: 37D20 
37D35; 
37C85  
}

\maketitle
\begin{abstract} We consider finitely generated free semigroup actions on a compact metric space and obtain quantitative information on Poincar\'e recurrence, average first return time and hitting frequency for the random orbits induced by the semigroup action. Besides, we relate the recurrence to balls with the rates of expansion of the semigroup's generators and the topological entropy of the semigroup action. Finally, we establish a partial variational principle and prove an ergodic optimization for this kind of dynamical action.
\end{abstract}

\setcounter{tocdepth}{1}

\section{Introduction}
The research on partially hyperbolic dynamics brought to the stage iterated systems of functions modeling the behavior within the central manifold. This circumstance led to the study of random dynamical systems and a thorough understanding of the dynamical and ergodic properties of these systems has already been achieved \cite{Saussol}. On the other hand, sequential dynamical systems 
have been introduced to model physical phenomena: instead of iterating the same dynamics, one allows the system that describes the real events to readjust with time, and one may work with a family of randomly chosen transformations in a way that matches the inevitable experimental errors \cite{KR11}. However, it is not yet clear how the classical results on first hitting or return times may be generalized to stationary and non-autonomous sequences of maps.

In this work we aim at an extension of the quantitative analysis of Poincar\'e recurrence to the realm of 
finitely generated free semigroup actions. In this context, a first important contribution was obtained in \cite{MR}, where the authors proved that, for rapidly mixing systems, the quenched recurrence rates are equal to the pointwise dimensions of a stationary measure. One should also refer \cite{AFV, RT, RSV} on the distribution of hitting times and extreme laws for random dynamical systems. Equally significant are the recent advances 
obtained in \cite{Abadi, Haydn, Saussol, GRS15}.
Ultimately, we are concerned with the description of the fastest return time when considering all the semigroup elements instead of a single dynamical system. In a recent work \cite{RoVa1}, it has been introduced a notion of topological entropy and pressure for finitely generated continuous free semigroup actions on a compact metric space. Later, in \cite{CRV}, it has been shown that a free semigroup action of either $C^1$ expanding maps, or, more generally, Ruelle-expanding transformations, has a unique measure of maximal entropy which is linked to annealed equilibrium states for random dynamical systems \cite{Baladi}. The main strategy to deal with such a system has been the codification of the random orbits by a true dynamics, namely the skew product based on a full shift with finitely many symbols. Keeping this approach in mind, here we address a few questions regarding recurrence, first return or hitting time maps, and the connection between the rate of frequency of visits to a set, its size, the entropy of the semigroup action and the Lyapunov exponents of the generators.

We will start proving that almost every point is recurrent either by random dynamical systems or by stationary sequential dynamics. Then we will establish a Kac-like property for such return times and estimate an upper bound for the Poincar\'e recurrence to balls, linking the latter to the quenched pressure of random dynamical systems. As return times are strongly related to other dynamically significant quantities, like entropy and Lyapunov exponents, we will also show that, in the case of random dynamical systems generated by expanding maps, the shortest fibred return time to dynamic balls grows linearly, which implies that typical fibred return times to balls may be expressed in terms of the random Lyapunov exponents of the dynamics and, consequently, are independent of the point.  
Moreover, we shall study the connection between the maximum hitting frequency/fastest mean return time to a set with its size when estimated by different invariant measures, extending the ergodic optimization obtained in \cite{Je} to the random context we are considering. Finally, we will introduce the notion of measure-theoretic entropy of a semigroup action and obtain a partial variational principle which improves the estimate in \cite{LMW2016} and complements \cite{Bis, Bis13}. We refer the reader to Subsection~\ref{sec:statements} for the precise statements of the main results.

\section{Main results}\label{sec:setting}
In this section we describe the free semigroup actions we are interested in and state our major conclusions on the quantitative recurrence within this context. The concepts and results we will consider in this work depend on the fixed set of generators $G_1$ but, to improve the general readability of the paper, we will omit this data in the notation.

\subsection{Setting}
Given a compact metric space $(X,d)$, a finite set of continuous maps $g_i:X \to X$, $i \in \mathcal{P}=\{1,2,\ldots,p\}$, and the finitely generated semigroup $(G,\,\circ)$ with the finite set of generators $G_1=\{id, g_1, g_2, \dots, g_p\}$, we write $G=\bigcup_{n\,\in\, \mathbb N_0} G_n$, where $G_0=\{id\}$ and $\underline g \in G_n$ if and only if $\underline g=g_{i_n} \dots g_{i_2} g_{i_1}$, with $g_{i_j} \in G_1$ (for notational simplicity's sake we will use $g_j \, g_i$ instead of the composition $g_j\,\circ\, g_i$). We note that a semigroup may have multiple generating sets. In what follows, we will assume that the generator set $G_1$ is minimal, meaning that no function $g_j$, for $j = 1, \ldots, p$, can be expressed as a composition of the remaining generators. Observe also that each element $\underline g$ of $G_n$ may be seen as a word which originates from the concatenation of $n$ elements in $G_1$. Yet, different concatenations may generate the same element in $G$. Nevertheless, in most of the computations to be done, we shall consider different concatenations instead of the elements in $G$ they create. One way to interpret this statement is to consider the itinerary map $\iota : \mathbb{F}_p  \to  G $ given by
$$\underline i=i_n \dots i_1  \quad \mapsto \quad \underline g_{\underline i} := g_{i_n} \dots g_{i_1}$$
where $\mathbb{F}_p$ is the free semigroup with $p$ generators, and to regard concatenations on $G$ as images by $\iota$ of paths on $\mathbb{F}_p$.

Set $G_1^* = G_1 \setminus \{id\}$ and, for every $n\ge 1$, let $G_n^*$ denote the space of concatenations of $n$ elements in $G_1^*$. To summon each element $\underline{g}$ of $G^*_n$, we will write $|\underline{g}|=n$ instead of $\underline g\in G^*_n$. In $G$, one consider the semigroup operation of concatenation defined as usual: if $\underline{g}=g_{i_n} \dots g_{i_2} g_{i_1}$ and $\underline{h}=h_{i_m} \dots h_{i_2} h_{i_1}$, where $n=|\underline g|$ and $m=|\underline h|$, then
$\underline{g}\,\underline{h}=g_{i_n} \dots g_{i_2} g_{i_1} h_{i_m} \dots h_{i_2} h_{i_1} \in G_{m+n}^*.$
The finitely generated semigroup $G$ induces an \emph{action} in $X$, say
$$
\begin{array}{rccc}
\mathbb{S} : & G \times X & \to & X \\
	& (g,x) & \mapsto & g(x).
\end{array}
$$
We say that $\mathbb{S}$ is a \emph{semigroup action} if, for any $\underline{g},\,\underline{h}  \in G$ and every $x \in X$, we have $\mathbb{S}(\underline{g}\,\underline{h},x)=\mathbb{S}(\underline{g}, \mathbb{S}(\underline{h},x)).$ The action $\mathbb{S}$ is continuous if the map $\underline g : X \to X$ 
is continuous for any $\underline g \in G$. As usual, $x\in X$ is a \emph{fixed point} for $\underline g \in G$ if $\underline g(x)=x$; the set of these fixed points will be denoted by $\text{Fix}(\underline g)$. A point $x \in X$ is said to be a \emph{periodic point with period $n$} by the action $\mathbb{S}$ if there exist $n \in \mathbb{N}$ and $\underline{g} \in G_n^*$ such that $\underline{g}(x) = x$. Write $\text{Per}(G_n)= \bigcup_{|\underline{g}|=n} \,\text{Fix}(\underline{g})$ for the set of all periodic points with period $n$. Accordingly, $\text{Per}(G)=\bigcup_{n\,\ge\, 1} \text{Per}(G_n)$ will stand for the set of periodic points of the whole semigroup action. We observe that, when $G_1^*=\{f\}$, these definitions coincide with the usual ones for the dynamical system $f$.

The action of semigroups of dynamics has a strong connection with skew products which has been scanned in order to obtain properties of semigroup actions by means of fibred and annealed quantities associated to the skew product dynamics (see e.g. \cite{CRV}). We recall that, if $X$ is a compact metric space 
and one considers a finite set of continuous maps $g_i:X \to X$, $i \in \mathcal{P}=\{1,2,\ldots,p\}$, $p\ge 1$, we have defined a skew product dynamics
\begin{equation}\label{de.Skew product}
\begin{array}{rccc}
\mathcal{F}_G : & \Sigma_p^+  \times X & \to & \Sigma_p^+  \times X \\
	& (\omega,x) & \mapsto & (\sigma(\omega), g_{\omega_1}(x))
\end{array}
\end{equation}
where $\omega=(\omega_1,\omega_2, \dots)$ is an element of the full unilateral space of sequences
$\Sigma_p^+ = \mathcal{P}^\mathbb{N}$ and $\sigma$ denotes the shift map on $\Sigma_p^+$. We will write
$\cF_G^n(\omega,x) = (\sigma^n(\omega), f_\omega^n(x))$ for every $n\ge 1$.


In what follows, we will denote by $\mathcal{M}_G$ the set of Borel probability measures on $X$ invariant by $g_i$ for all $i \in \{1,\cdots,p\}$. And $\mathbb{P}_{\underline{a}}$ will stand for the Bernoulli probability measure in $\Sigma^+_p$ which is the Borel product measure determined by a vector $\underline{a}=(a_1,\cdots, a_p)$ satisfying $0<a_i<1$ for every $i \in \{1,2,\cdots,p\}$ and $\sum_{i=1}^p\,a_i=1$.

\subsection{Statements}\label{sec:statements}

As a semigroup action is not a classical dynamical system, but rather an action of several dynamics in the same ambient space which are selected randomly according to some probability measure, the possible notions of recurrence must be carefully chosen and one needs to guarantee that recurrence actually happens. In what follows, we shall examine recurrence either from the point of view of individual concatenations of maps (associated to individual infinite paths in the free semigroup) or by estimating the fastest return times (the smallest return time associated to any of the dynamics in the semigroup).

\subsubsection{\textbf{\emph{Poincar\'e recurrence for sequences of stationary maps}}}

While using infinite concatenations of elements in $G_1$, it is natural to consider the shift space
$\Sigma_p^+ =\{1, \dots p\}^\mathbb N$. Any sequence $\omega\in \Sigma_p^+$ determines
a sequential dynamical system $(g_{\omega_i})_{i\,\in\, \mathbb N}$ and their compositions
\begin{equation*}\label{de.semigroup-orbit}
n\ge 1 \,\mapsto\, f_\omega^n=g_{\omega_n} \dots g_{\omega_2} g_{\omega_1}.
\end{equation*}
For any random walk $\mathbb P$ on $\mathbb F_p$ one expects to find
generic paths for which the dynamics in $X$ exhibits recurrence, meaning that, if one disregards the first shift iterations of the sequence $\omega \in \Sigma_p^+$, then almost
every point in $X$ returns infinitely often by the shifted stationary sequence of maps (a notion that generalizes periodicity). 
Our first result asserts that this is indeed the case.

\begin{maintheorem}\label{Thm:A}
Let $G$ be a finitely generated free semigroup, $\mathbb{S}$ be the corresponding continuous semigroup action, $\nu$ be a Borel probability measure invariant by every generator in $G_1^*$ and $\mathbb{P}$ be a $\sigma$-invariant Borel probability measure on $\Sigma_p^+$. Then, for any measurable subset $A \subset X$ the following properties hold:
\begin{enumerate}
\item For any $\omega \in \Sigma_p^+$, the set of points $x \in A$ for which there are positive integers $n \geq k$ satisfying $g_{\omega_n}g_{\omega_{n-1}}\ldots g_{\omega_k}(x) \in A$ has full $\nu$-measure in $A$.
\item For $\mathbb{P}$-almost every $\omega \in \Sigma_p^+$, the set of the points $x \in A$ whose orbit $\left(f^k_\omega(x)\right)_{k \,\in\, \mathbb{N}}$ returns to $A$ infinitely often has full $\nu$-measure in $A$.
\end{enumerate}
\end{maintheorem}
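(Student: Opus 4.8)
The plan is to reduce both statements to the classical Poincar\'e recurrence theorem applied to suitable measure-preserving transformations. For part (1), fix $\omega\in\Sigma_p^+$ and $k\ge 1$. The map $T:=g_{\omega_k}$ preserves $\nu$ by hypothesis, so the usual Poincar\'e recurrence theorem gives a full $\nu$-measure subset $A_k\subset A$ of points $x$ for which $T^m(x)\in A$ for infinitely many $m\ge 1$. However, we want returns under the \emph{non-autonomous} composition $g_{\omega_n}\cdots g_{\omega_k}$ rather than under iterates of a single map, so this naive application is not quite what is needed. Instead I would argue directly: since each $g_{\omega_j}$ preserves $\nu$, the finite compositions $f^{[k,n]}_\omega:=g_{\omega_n}\cdots g_{\omega_k}$ also preserve $\nu$ for every $n\ge k$. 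Consider the "bad set" $B=\{x\in A:\ f^{[k,n]}_\omega(x)\notin A\ \text{for all }n\ge k\}$. If $\nu(B)>0$, one shows the sets $B,\ (f^{[k,k]}_\omega)^{-1}... $ — more precisely, a standard disjointness argument using that each forward image $f^{[k,n]}_\omega(B)$ avoids $A\supset B$ — leads to infinitely many sets of equal positive $\nu$-measure inside a space of finite measure, a contradiction. This is the same combinatorial skeleton as the proof of Poincar\'e recurrence, adapted to the cocycle of maps, and it is the only genuinely new (though routine) ingredient.

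For part (2), the natural idea is to lift the problem to the skew product $\cF_G:\Sigma_p^+\times X\to\Sigma_p^+\times X$ of \eqref{de.Skew product}. Since $\mathbb{P}$ is $\sigma$-invariant and $\nu\in\mathcal{M}_G$, the product measure $\mathbb{P}\times\nu$ is $\cF_G$-invariant: indeed $(\cF_G)_*(\mathbb{P}\times\nu)=\mathbb{P}\times\nu$ follows from $\sigma_*\mathbb{P}=\mathbb{P}$ and $(g_i)_*\nu=\nu$ for each $i$, checked on product sets. Now apply the Poincar\'e recurrence theorem to $\cF_G$ and the set $\widehat A:=\Sigma_p^+\times A$, which has $(\mathbb{P}\times\nu)$-measure $\nu(A)$: almost every $(\omega,x)\in\widehat A$ returns to $\widehat A$ infinitely often, i.e. $\cF_G^k(\omega,x)=(\sigma^k\omega, f^k_\omega(x))\in\widehat A$, equivalently $f^k_\omega(x)\in A$, for infinitely many $k$. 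Let $R\subset\widehat A$ be the full-measure set of such recurrent points. By Fubini applied to the product measure $\mathbb{P}\times\nu$, the section $R_\omega=\{x\in A:(\omega,x)\in R\}$ satisfies $\nu(R_\omega)=\nu(A)$ for $\mathbb{P}$-almost every $\omega$, which is exactly the claim.

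The one point requiring a little care is that in part (2) we only obtain recurrence for $\mathbb{P}\times\nu$-a.e. pair, and Fubini then yields \emph{a set} $\widehat A\setminus R$ of measure zero whose $\omega$-sections can fail to be $\nu$-null only for $\omega$ in a $\mathbb{P}$-null set; one should spell out that the exceptional $\omega$'s form a genuinely $\mathbb{P}$-measurable set, which follows from measurability of $R$ in the product $\sigma$-algebra. I expect the main (modest) obstacle to be organizing the cocycle version of the Poincar\'e argument in part (1) cleanly — in particular being careful that the relevant forward images $f^{[k,n]}_\omega(B)$ are measurable and pairwise related in the way the pigeonhole step requires — but no deep new idea is needed beyond invariance of $\nu$ under each generator and $\sigma$-invariance of $\mathbb{P}$. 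An alternative, perhaps slicker route to (1) is to note that it is the $\mathbb{P}=\delta_\omega$-type "quenched" statement and can itself be deduced from a skew-product argument over the orbit closure of $\omega$, but the direct cocycle argument is shortest.
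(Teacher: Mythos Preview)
Your approach to part~(2) is exactly the paper's: lift to the skew product $\cF_G$, note that $\mathbb{P}\times\nu$ is $\cF_G$-invariant, apply Poincar\'e recurrence to $\Sigma_p^+\times A$, and use Fubini to disintegrate over $\omega$. This is correct.

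For part~(1), your overall strategy---a direct Poincar\'e-style disjointness argument adapted to the cocycle---is again the paper's, but your choice of bad set has a genuine gap. With $B=\{x\in A: f^{[k,n]}_\omega(x)\notin A\text{ for all }n\ge k\}$ for a \emph{fixed} $k$, the preimages $(f^{[k,n]}_\omega)^{-1}(B)$ are \emph{not} pairwise disjoint: if $z=f^{[k,n]}_\omega(x)\in B$ and $f^{[k,m]}_\omega(x)\in B$ with $m>n$, then $g_{\omega_m}\cdots g_{\omega_{n+1}}(z)\in B\subset A$, yet membership of $z$ in $B$ only forbids compositions starting at index $k$, not at index $n+1$, so no contradiction arises. (Your remark about forward images does not help either, since forward images need not preserve $\nu$.) The paper repairs this by taking the bad set to be the intersection over \emph{all} starting indices,
\[
B_\omega=\bigcap_{k\ge 1}\bigcap_{n\ge k}\{x\in A: g_{\omega_n}\cdots g_{\omega_k}(x)\notin A\},
\]
so that $z\in B_\omega$ forbids $g_{\omega_m}\cdots g_{\omega_{n+1}}(z)\in A$ for every $m\ge n+1$; then the preimages $(g_{\omega_j}\cdots g_{\omega_1})^{-1}(B_\omega)$ are genuinely pairwise disjoint with common measure $\nu(B_\omega)$, forcing $\nu(B_\omega)=0$. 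This is precisely the ``organizing the cocycle version cleanly'' issue you anticipated, but it is not merely cosmetic: without the intersection over $k$ the pigeonhole step fails. Your alternative suggestion of running a skew-product argument with $\mathbb{P}=\delta_\omega$ also does not work as stated, since $\delta_\omega$ is $\sigma$-invariant only when $\omega$ is a constant sequence.
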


\subsubsection{\textbf{\emph{Kac expected return time}}}\label{sse:Kac}

Given a measurable map $f: X\to X$ preserving an ergodic probability measure $\nu$, Kac's Lemma asserts that the expected first return time to a positive measure set $A\subset X$ is $\frac{1}{\nu(A)}$.
More precisely, if $\nu(A)>0$ and the first hitting time of $x$ to $A$ is defined by
\begin{equation*}\label{de.1st return}
n_A(x)=\left\{ \begin{array}{ll}
\inf\,\left\{k \in \mathbb{N} \colon f^k(x) \in A\right\} & \mbox{if this set is nonempty}\\
+\infty & \mbox{otherwise}
\end{array}
\right.
\end{equation*}
then $n_A$ is $\nu$-integrable and
\begin{equation}\label{eq:Kac formula}
\int_A \,n_A(x) \, d\nu_A = \frac{1}{\nu(A)}
\end{equation}
where $\nu_A = \frac{\nu}{\nu(A)}$ is the normalized probability in $A$. A version of Kac's Lemma for suspension flows may be found in \cite{Var16}.

In view of Theorem~\ref{Thm:A}(2), it is natural to define, for each measurable $A \subset X$ and $\mathbb{P}$-almost every $\omega \in \Sigma_p^+$, the \emph{first return time to $A$ of $x \in A$} by the dynamics
$\left(f_\omega^k\right)_{k \,\in\, \mathbb{N}_0}$ as follows:
\begin{equation}\label{de.1st ret}
n^\omega_A(x)= \left\{ \begin{array}{ll}
\inf\,\left\{k \in \mathbb{N} \colon f_\omega^k(x) \in A\right\} & \mbox{if this set is nonempty}\\
+\infty & \mbox{otherwise.}
\end{array}
\right.
\end{equation}

We say that the semigroup action $\mathbb S$ is \emph{ergodic with respect to $\mathbb P$ and $\nu$} if the measure $\mathbb P \times \nu$ is ergodic with respect to $\mathcal{F}_G$. This assumption 
is somehow demanding, implying, in particular, that $\mathbb{P}$ is ergodic with respect to $\sigma$. In most instances, however, we will only need to assume that $\mathbb{P}$ is ergodic and that, for any set $A\subset X$ such that $g_i^{-1}(A) = A$ for all $1\le i \le p$, we have $\nu(A) \times \nu(X \setminus A) = 0$.

In Section~\ref{se:Kac} we will show that ergodic semigroup actions 
satisfy a Kac-like recurrence property: the average asymptotic behavior, as $k$ tends to infinity, of the expected first return time to $A$ by the sequence $\big(f^n_{\sigma^k(\omega)}\big)_{n \, \in \, \mathbb N} = \left(g_{\omega_n}\circ \dots \circ g_{\omega_{k+1}}\circ g_{\omega_{k}}\right)_{n \, \in \, \mathbb N}$ is precisely $\frac1{\nu(A)}$. In particular:

\begin{maintheorem}\label{Thm:B}
Let $G$ be a finitely generated free semigroup endowed with a Bernoulli probability measure $\mathbb{P}_{\underline{a}}$ and $\mathbb{S}$ be the corresponding continuous semigroup action. Consider a Borel probability measure $\nu$ in $X$ invariant by every generator in $G$ and assume that $\mathbb{S}$ is ergodic with respect to $\mathbb P_{\underline{a}}$ and $\nu$. Then, for $\mathbb{P}_a$-almost every $\omega$,
$\lim_{k \,\to\, +\infty}\, \int_A \,n_A^{\sigma^k(\omega)}(x) \, d\nu_A(x) = \frac1{\nu(A)}.$
Moreover, there exists a Baire residual subset $\mathcal R \subset \Sigma_p^+$ such that, for every $\omega\in\mathcal R$,
$$\int_A \,n_A^{\omega}(x) \, d\nu_A(x) = \frac1{\nu(A)}.$$
\end{maintheorem}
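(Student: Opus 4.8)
The plan is to lift everything to the skew product $\mathcal{F}_G$ on $\Sigma_p^+\times X$, apply the classical Kac formula there, and only then transport the information back to the base. First I would note that, as $\mathbb{P}_{\underline{a}}$ is shift invariant and $\nu$ is fixed by each generator, the measure $\mathbb{P}_{\underline{a}}\times\nu$ is $\mathcal{F}_G$-invariant (on cylinders this boils down to $\sum_{i=1}^{p}a_i\,\nu(g_i^{-1}B)=\nu(B)$), and it is ergodic by hypothesis. Put $\widehat{A}:=\Sigma_p^+\times A$, so $(\mathbb{P}_{\underline{a}}\times\nu)(\widehat{A})=\nu(A)$; since $\mathcal{F}_G^{\,k}(\omega,x)=(\sigma^{k}\omega,f_\omega^{k}(x))$, for $(\omega,x)\in\widehat{A}$ (that is, $x\in A$) the first return time of $\mathcal{F}_G$ to $\widehat{A}$ is exactly $n_A^{\omega}(x)$. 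Kac's Lemma applied to the ergodic probability-preserving system $(\Sigma_p^+\times X,\mathcal{F}_G,\mathbb{P}_{\underline{a}}\times\nu)$ — which is valid although $\mathcal{F}_G$ need not be invertible, since the measure is finite and ergodic — then gives $\int_{\widehat{A}}n_A^{\omega}(x)\,d(\mathbb{P}_{\underline{a}}\times\nu)=1$, and by Fubini the function $\Phi(\omega):=\int_A n_A^{\omega}(x)\,d\nu_A(x)\in[1,+\infty]$ satisfies $\Phi\in L^{1}(\mathbb{P}_{\underline{a}})$ and $\int_{\Sigma_p^+}\Phi\,d\mathbb{P}_{\underline{a}}=\tfrac{1}{\nu(A)}$ (consistently, $n_A^{\omega}$ is $\nu_A$-integrable for $\mathbb{P}_{\underline{a}}$-a.e.\ $\omega$, in agreement with Theorem~\ref{Thm:A}(2)).

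Since $\mathbb{P}_{\underline{a}}$ is Bernoulli, the shift is ergodic for it, so Birkhoff's ergodic theorem applied to $\Phi$ yields, for $\mathbb{P}_{\underline{a}}$-a.e.\ $\omega$, that the averages of $k\mapsto\Phi(\sigma^{k}\omega)=\int_A n_A^{\sigma^{k}\omega}(x)\,d\nu_A(x)$ converge to $\int\Phi\,d\mathbb{P}_{\underline{a}}=\tfrac{1}{\nu(A)}$, which gives the first assertion. I would also record here the structural fact needed for the second one: $\Phi$ is lower semicontinuous. Indeed, for each fixed $x$ the set $\{\omega:n_A^{\omega}(x)\le N\}$ depends only on $\omega_1,\dots,\omega_N$, hence is clopen, so $\omega\mapsto n_A^{\omega}(x)$ is continuous into $[1,+\infty]$; Fatou's lemma then gives $\liminf_{\tau\to\omega}\Phi(\tau)\ge\Phi(\omega)$.

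Being lower semicontinuous, $\Phi$ is of first Baire class, hence its set $\mathcal{R}$ of continuity points is a dense $G_\delta$ (residual) subset of the compact metric space $\Sigma_p^+$. To pin down the value of $\Phi$ on $\mathcal{R}$ one marries this with the preceding steps: given $\omega_0\in\mathcal{R}$ and $\varepsilon>0$, pick a cylinder $C\ni\omega_0$ on which $\Phi$ oscillates by less than $\varepsilon$, then choose a $\mathbb{P}_{\underline{a}}$-generic $\omega$ whose shifted orbit visits $C$ with positive frequency while controlling the values $\Phi(\sigma^{k}\omega)$ near $\tfrac{1}{\nu(A)}$, so that evaluating at a $k$ with $\sigma^{k}\omega\in C$ forces $|\Phi(\omega_0)-\tfrac{1}{\nu(A)}|<2\varepsilon$. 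The crux — and the step I expect to be the main obstacle — is precisely this coupling: $\Phi$ is genuinely discontinuous and $\Phi(\sigma^{k}\omega)$ need not converge along the orbit, so one cannot rely on any pointwise limit; the residual conclusion must instead be extracted from the $L^{1}$/ergodic-average information together with the semicontinuity of $\Phi$, arranging the $\mathbb{P}_{\underline{a}}$-almost-sure selections so that they are compatible with the topologically generic set $\mathcal{R}$.
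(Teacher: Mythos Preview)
Your reduction to the skew product and application of Kac's Lemma are correct and match the paper, and your lower-semicontinuity argument for $\Phi$ (continuity of $\omega\mapsto n_A^\omega(x)$ into $[1,+\infty]$ plus Fatou) is actually slicker than the paper's series-truncation proof in Lemma~\ref{le:semi.c.inf}. The gap is in the first assertion: Birkhoff's theorem yields only that the Ces\`aro averages $\frac{1}{k}\sum_{j=0}^{k-1}\Phi(\sigma^j\omega)$ converge to $\tfrac{1}{\nu(A)}$, whereas Theorem~\ref{Thm:B} demands $\Phi(\sigma^k\omega)\to\tfrac{1}{\nu(A)}$ itself. The paper makes this distinction explicit in Proposition~\ref{prop:Kacskew}: the Ces\`aro conclusion is stated for any ergodic $\mathbb{P}$, and the upgrade to the genuine limit is obtained under the extra hypothesis that $\mathbb{P}$ is \emph{mixing} --- which the Bernoulli measure $\mathbb{P}_{\underline{a}}$ is. You never invoke mixing, so this step is missing from your argument.

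Once that first assertion is secured, the ``main obstacle'' you flag for the residual statement dissolves. Given $\omega_0\in\mathcal R$, pick any $\omega$ in the $\mathbb{P}_{\underline{a}}$-full-measure set where the first assertion holds; since $\mathbb{P}_{\underline{a}}$ has full support, there is a subsequence $n_k\to\infty$ with $\sigma^{n_k}\omega\to\omega_0$, and along it $\Phi(\sigma^{n_k}\omega)\to\tfrac{1}{\nu(A)}$ automatically, so continuity of $\Phi$ at $\omega_0$ forces $\Phi(\omega_0)=\tfrac{1}{\nu(A)}$. This is exactly how the paper concludes after Lemma~\ref{le:semi.c.inf}; no delicate coupling of $L^1$ or ergodic-average information with positive-frequency visits is needed once you have the pointwise limit along the whole orbit.
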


As a consequence of the last claim, there exists a dense set of values $\omega\in \Sigma_p^+$ for which the non-autonomous dynamics $(f_\omega^n)_{n\ge 1}$ satisfy Kac's formula \eqref{eq:Kac formula}. It is still an open question to determine  
whether this formula holds for $\mathbb P$-almost every $\omega$.

\subsubsection{\textbf{\emph{Partial variational principle}}}

The formula of Abramov and Rokhlin \cite{AbR62} for the measure theoretical entropy of the skew product $\mathcal{F}_G$ with respect to the product measure $\mathbb{P} \times \nu$ suggests a way to define a fibred notion of \emph{metric entropy of a free semigroup action} with respect to a random walk on $\Sigma_p^+$ and an invariant measure on $X$, which we will denote by $h_\nu(\mathbb{S},\mathbb{P})$. This will be done in Subsection~\ref{sse.metric entropy}, just before proving a partial variational principle which extends Theorem 1.2 of \cite{LMW2016} to non-symmetric random walks. Meanwhile, recall that $P_{\text{top}}^{(q)}(\cF_G,0,\mathbb{P})$ stands for the quenched topological pressure of the skew product $\cF_G$ with respect to the random walk $\mathbb{P}$ (see \cite{Baladi}), $h_{\text{top}}(\mathbb{S})$ is the topological entropy of the free semigroup action $\mathbb{S}$ (cf. definition in \cite{RoVa1}) and $h_{\text{top}}(\mathbb{S},\mathbb{P})$ is the relative topological entropy of the free semigroup action with respect to the random walk $\mathbb{P}$ (see \cite{CRV}).

\begin{maintheorem}\label{Thm:C}
Let $\mathbb{S}$ be a finitely generated free semigroup action with generators $G_1=\{id, g_1, \dots, g_p\}$ and consider a Borel $\sigma$-invariant probability measure $\mathbb{P}$ on $\Sigma_p^+$. Then
$$\sup_{\nu \,\in\, \mathcal{M}_G} \, h_\nu(\mathbb{S},\mathbb{P}) \leq h_{\text{top}} (\mathbb{S}) + \left(\log p - h_\mathbb{P}(\sigma)\right).$$
If, additionally, each generator $g_i$ is $C^2$ expanding ($1 \le i \le p$) and $\mathbb{P}=\mathbb{P}_{\underline{a}}$, then
\begin{equation}\label{eq:vp}
\sup_{\nu \,\in\, \mathcal{M}_G} \, h_\nu(\mathbb{S},\mathbb{P}_{\underline{a}}) \leq P_{\text{top}}^{(q)}(\cF_G,0,\mathbb{P}_{\underline{a}}) \leq h_{\text{top}}(\mathbb{S},\mathbb{P}_{\underline{a}}).
\end{equation}
\end{maintheorem}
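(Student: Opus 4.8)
The plan is to reduce the first inequality to the Abramov--Rokhlin formula together with the variational principle for the skew product $\cF_G$, and to reduce the chain of inequalities in \eqref{eq:vp} to known thermodynamic identities for random expanding maps. For the general bound, I would proceed as follows. First, I would record the definition of $h_\nu(\mathbb{S},\mathbb{P})$ that is to be fixed in Subsection~\ref{sse.metric entropy}: modelled on the Abramov--Rokhlin formula, it should be exactly the fibre entropy $h_{\mathbb{P}\times\nu}(\cF_G\mid\sigma) = h_{\mathbb{P}\times\nu}(\cF_G) - h_\mathbb{P}(\sigma)$, so that $h_\nu(\mathbb{S},\mathbb{P}) + h_\mathbb{P}(\sigma) = h_{\mathbb{P}\times\nu}(\cF_G)$. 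Next, by the classical variational principle applied to the continuous map $\cF_G$ on the compact space $\Sigma_p^+\times X$, every invariant measure $\mu$ projecting to $\mathbb{P}$ on $\Sigma_p^+$ satisfies $h_\mu(\cF_G)\le h_{\text{top}}(\cF_G)$; specialising to $\mu=\mathbb{P}\times\nu$ with $\nu\in\mathcal{M}_G$ and taking the supremum over such $\nu$ gives $\sup_\nu h_\nu(\mathbb{S},\mathbb{P}) \le h_{\text{top}}(\cF_G) - h_\mathbb{P}(\sigma)$. Finally I would invoke the relation between $h_{\text{top}}(\cF_G)$ and $h_{\text{top}}(\mathbb{S})$ established in \cite{RoVa1,CRV}, namely $h_{\text{top}}(\cF_G) = h_{\text{top}}(\mathbb{S}) + \log p$, which upon substitution yields precisely $h_{\text{top}}(\mathbb{S}) + (\log p - h_\mathbb{P}(\sigma))$, as claimed.

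For the refined statement under the $C^2$ expanding hypothesis with $\mathbb{P}=\mathbb{P}_{\underline a}$, I would argue along the chain from left to right. The left inequality $\sup_{\nu}h_\nu(\mathbb{S},\mathbb{P}_{\underline a}) \le P_{\text{top}}^{(q)}(\cF_G,0,\mathbb{P}_{\underline a})$ follows from the quenched (relativised) variational principle for random transformations due to Bogenschütz and Kifer: $P_{\text{top}}^{(q)}(\cF_G,\varphi,\mathbb{P}_{\underline a}) = \sup\{ h_{\mathbb{P}_{\underline a}\times\nu}(\cF_G\mid\sigma) + \int\varphi\,d(\mathbb{P}_{\underline a}\times\nu)\}$, where the supremum is over measures with marginal $\mathbb{P}_{\underline a}$; taking $\varphi\equiv 0$ and restricting the supremum to product measures $\mathbb{P}_{\underline a}\times\nu$ with $\nu\in\mathcal{M}_G$ gives the inequality directly from the identification $h_\nu(\mathbb{S},\mathbb{P}_{\underline a}) = h_{\mathbb{P}_{\underline a}\times\nu}(\cF_G\mid\sigma)$. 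For the right inequality $P_{\text{top}}^{(q)}(\cF_G,0,\mathbb{P}_{\underline a}) \le h_{\text{top}}(\mathbb{S},\mathbb{P}_{\underline a})$, I would use the characterisation of the relative topological entropy $h_{\text{top}}(\mathbb{S},\mathbb{P}_{\underline a})$ from \cite{CRV} and the fact, also from \cite{CRV}, that for free semigroup actions of Ruelle-expanding (in particular $C^2$ expanding) maps the quenched pressure with zero potential coincides with — or is bounded by — this relative entropy; here the expanding hypothesis enters through the existence and uniqueness of the measure of maximal entropy and the associated equilibrium-state machinery for the skew product.

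The main obstacle I anticipate is not any single inequality but the bookkeeping that makes all three quantities $h_\nu(\mathbb{S},\mathbb{P})$, $P_{\text{top}}^{(q)}(\cF_G,0,\mathbb{P}_{\underline a})$ and $h_{\text{top}}(\mathbb{S},\mathbb{P}_{\underline a})$ refer to compatible normalisations, so that the Abramov--Rokhlin split $h_{\mathbb{P}\times\nu}(\cF_G) = h_\nu(\mathbb{S},\mathbb{P}) + h_\mathbb{P}(\sigma)$ and the skew-product entropy formula $h_{\text{top}}(\cF_G) = h_{\text{top}}(\mathbb{S}) + \log p$ combine cleanly; in particular one must check that the definition adopted in Subsection~\ref{sse.metric entropy} is genuinely the fibre entropy and not, say, a conditional entropy with a different reference $\sigma$-algebra. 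A secondary difficulty is verifying that the hypotheses of the Bogenschütz--Kifer quenched variational principle are met by $\cF_G$ (measurability of the fibre maps in $\omega$, integrability conditions), which is immediate here since the fibre maps depend only on the first coordinate $\omega_1$ and there are finitely many generators, but this should be stated. Once these identifications are in place, every inequality is an instance of a known variational principle, so the proof is essentially a matter of assembling them in the right order.
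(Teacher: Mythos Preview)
Your proposal is correct and follows essentially the same route as the paper: Abramov--Rokhlin to rewrite $h_\nu(\mathbb{S},\mathbb{P})$ as $h_{\mathbb{P}\times\nu}(\cF_G)-h_\mathbb{P}(\sigma)$, then the classical variational principle for $\cF_G$ together with the skew-product formula $h_{\text{top}}(\cF_G)=\log p+h_{\text{top}}(\mathbb{S})$ (which the paper attributes to Bufetov \cite{Bufetov} rather than \cite{RoVa1,CRV}), and for \eqref{eq:vp} the quenched variational characterisation of $P_{\text{top}}^{(q)}$ followed by the bound against the relative entropy. The only point worth sharpening is the last inequality: the paper obtains $P_{\text{top}}^{(q)}(\cF_G,0,\mathbb{P}_{\underline a})\le h_{\text{top}}(\mathbb{S},\mathbb{P}_{\underline a})$ by passing through the \emph{annealed} pressure, using $P_{\text{top}}^{(q)}\le P_{\text{top}}^{(a)}$ and then the identification $P_{\text{top}}^{(a)}(\cF_G,0,\mathbb{P}_{\underline a})=h_{\text{top}}(\mathbb{S},\mathbb{P}_{\underline a})$ from \cite{CRV}; your phrasing ``coincides with --- or is bounded by'' is slightly imprecise here, and making the annealed intermediate step explicit would tighten the argument.
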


We remark that the second inequality in \eqref{eq:vp} may be strict, as shown by Example~\ref{example1}.

\subsubsection{\textbf{\emph{Poincar\'e recurrence of balls}}}

In this subsection we will refer to return times of a set to itself by concatenations $f_\omega^n$ of dynamics in $G_1$ associated to a fixed $\omega\in \Sigma_p^+$. Given $A\subset X$ and $\omega\in \Sigma_p^+$, the \emph{$\omega$-shortest return time of $A$ to itself} is defined by
\begin{equation}\label{de:fibre-return}
\mathcal{T}^\omega(A) = \left\{\begin{array}{ll}
\inf\, \{k\in \mathbb{N} \colon f_\omega^k (A) \cap  A \neq\emptyset \} & \mbox{if this set is nonempty}\\
+\infty & \mbox{otherwise.}
\end{array}
\right.
\end{equation}
The shortest return time of the ball $B_\vep(x)$ by the semigroup action $\mathbb{S}$ is equal to
\begin{equation}\label{de:action-return}
\mathcal{T}^{\mathbb{S}}(B_\vep(x)) = \inf\, \{k\in \mathbb{N} \colon \,\exists\,\, \underline g\in G_k^* \colon \,\underline g(B_\vep(x))\cap B_\vep(x) \neq \emptyset\}
\end{equation}
whenever this set is nonempty. Or, equivalently,
$$\mathcal{T}^\mathbb{S}(B_\vep(x)) = \inf_{\omega\, \in\, \Sigma_p^+}\,\mathcal{T}^\omega (B_\vep(x)).$$
Concerning this concept, the next result asserts that, for $\mathbb P$-typical infinite concatenations of dynamics, the minimal returns of dynamical balls grow linearly with the radius, similarly to what happens with a single dynamical system satisfying the orbital specification property and having positive entropy (cf. ~\cite[Theorem~1]{ACS03} and \cite[Theorem B]{Var09} for the case of return times to cylinders and dynamic balls, respectively). 

\begin{maintheorem}\label{Thm:D}
Let $G$ be the semigroup generated by $G_1=\{id, g_1, \dots, g_p\}$, where the elements in $G_1^*$ are $C^1$ expanding maps on a compact connected Riemannian manifold $X$ preserving a common Borel probability measure $\nu$. Consider the continuous semigroup action $\mathbb{S}$ induced by $G$ and a $\sigma$-invariant probability measure $\mathbb P$ on $\Sigma_p^+$. If $h_\nu(\mathbb S,\mathbb P)>0$,  
then, for $\nu$-almost every $x\in X$, one has
\begin{equation}\label{eq:SemigroupUpper}
\limsup_{\delta \,\to\, 0} \,\frac{\mathcal{T}^\mathbb{S}(B_\vep(x)) }{-\log \vep} \le \frac{1}{\log \lambda}
\end{equation}
where $\lambda= \min_{\substack{1\,\le\, i \,\le\, p}}\, \|Dg_i\|.$
If, in addition, all elements in $G^*_1$ are conformal maps, $\mathbb{P}=\mathbb{P}_{\underline a}$ and the semigroup action is ergodic with respect to $\mathbb P_{\underline a}$ and $\nu$, 
then, for $\mathbb P_{\underline a}$-almost every $\omega$ and $\nu$-almost every $x\in X$,
\begin{equation}\label{eq:mc2a}
\lim_{\delta\,\to\, 0} \,\frac{ \mathcal{T}^\omega (B_\delta(x)) }{-\log \vep}	
	= \frac{\dim X}{\sum_{i=1}^p a_i \int \log |\det Dg_i| \, d\nu}
\end{equation}
where $\dim X$ stands for the dimension of the manifold $X$.
\end{maintheorem}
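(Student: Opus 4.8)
The plan is to prove \eqref{eq:SemigroupUpper} by a soft ``covering'' argument and then to obtain \eqref{eq:mc2a} by matching an upper estimate of the same flavour, tuned to the \emph{typical} expansion rate, with a lower estimate of Borel--Cantelli type. For \eqref{eq:SemigroupUpper}: each $g_i\in G_1^*$ is a $C^1$ expanding map of the compact connected manifold $X$, hence a covering map dilating distances at a rate $\ge\lambda=\min_{1\le i\le p}\|Dg_i\|>1$, so every $\underline g\in G_n^*$ dilates distances by $\ge\lambda^n$. Given $x,z\in X$ and a path from $\underline g(x)$ to $z$ of length $\le\diam X$, its lift through $\underline g$ starting at $x$ ends in $\underline g^{-1}(z)$ and has length $\le\lambda^{-n}\diam X$; hence $\underline g^{-1}(z)$ is $\lambda^{-n}\diam X$-dense in $X$, and $\underline g(B_\vep(x))=X$ once $n\ge(-\log\vep+\log\diam X)/\log\lambda$, which in particular yields a return. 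Thus $\mathcal T^{\mathbb S}(B_\vep(x))\le(-\log\vep)/\log\lambda+O(1)$ for \emph{every} $x$, and \eqref{eq:SemigroupUpper} follows; the hypothesis $h_\nu(\mathbb S,\mathbb P)>0$ plays no role here but is indispensable for \eqref{eq:mc2a}.

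For the upper bound in \eqref{eq:mc2a}: writing $\phi_i=|Dg_i|$ for the conformal factor one has $|\det Dg_i|=\phi_i^{\dim X}$ and $\|Df_\omega^n(x)\|=\prod_{j=1}^n\phi_{\omega_j}(f_\omega^{j-1}(x))=:\phi_n^\omega(x)$, so ergodicity of $\mathbb P_{\underline a}\times\nu$ under $\cF_G$ and Birkhoff's theorem give, for $\mathbb P_{\underline a}$-a.e.\ $\omega$ and $\nu$-a.e.\ $x$,
\begin{equation*}
\frac1n\log\phi_n^\omega(x)\ \xrightarrow[\ n\to\infty\ ]{}\ \frac{1}{\dim X}\sum_{i=1}^{p}a_i\int\log|\det Dg_i|\,d\nu\ =:\ \frac{L}{\dim X}.
\end{equation*}
By bounded distortion of $f_\omega^n$ on dynamic balls (from the smoothness and conformality of the generators), while $\phi_n^\omega(x)\vep\le\delta_0$ for a fixed scale $\delta_0$ one has $f_\omega^n(B_\vep(x))\supseteq B_{c\,\phi_n^\omega(x)\vep}(f_\omega^n(x))$; this radius reaches $\delta_0$ at the time $n_1(\vep)=(1+o(1))\,\dim X\,(-\log\vep)/L$, and then $O(1)$ further steps make the image all of $X$ (apply the covering estimate above to the shifted word), giving a return. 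Hence $\mathcal T^\omega(B_\vep(x))\le n_1(\vep)+O(1)$ and $\limsup_{\vep\to0}\mathcal T^\omega(B_\vep(x))/(-\log\vep)\le\dim X/L$.

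The matching lower bound is the heart of the matter. Fix $\epsilon>0$. A return $f_\omega^n(B_\vep(x))\cap B_\vep(x)\neq\emptyset$ with $n\le(1-\epsilon)n_1(\vep)$ forces, via bounded distortion and the Birkhoff estimate, $d(x,f_\omega^n(x))\le C\phi_n^\omega(x)\vep\le\vep^{\epsilon/2}$ once $\vep$ is small (the finitely many $n$ for which the Birkhoff estimate is not yet in force, and the bounded $n$ for which a genuine return would force $x$ near $\text{Fix}(f_\omega^n)$, are discarded using $\nu(\text{Fix}(f_\omega^n))=0$, valid since $h_\nu(\mathbb S,\mathbb P)>0$). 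So, as $\vep\to0$,
\begin{equation*}
\nu\big(\{x:\mathcal T^\omega(B_\vep(x))\le(1-\epsilon)n_1(\vep)\}\big)\ \le\ o(1)+\sum_{n\le(1-\epsilon)n_1(\vep)}\nu\big(\{x:d(x,f_\omega^n(x))<\vep^{\epsilon/2}\}\big),
\end{equation*}
and the decisive ingredient becomes the scale-invariant bound $\nu(\{x:d(x,f_\omega^n(x))<r\})\lesssim r^{d_\nu}$, uniform in $n$ (and $\omega$), where $d_\nu:=h_\nu(\mathbb S,\mathbb P)/(L/\dim X)$ is the (positive!) pointwise dimension of $\nu$ --- this is exactly where $h_\nu(\mathbb S,\mathbb P)>0$ is used. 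I would prove it by localizing $\{x:d(x,f_\omega^n(x))<r\}$ near the fibred periodic points $x_0\in\text{Fix}(f_\omega^n)\cap\supp\nu$: conformality and expansion make $Df_\omega^n(x_0)-\mathrm{Id}$ invertible with inverse of norm $\lesssim1/\phi_n^\omega(x_0)$, so the corresponding piece has diameter $\lesssim r/\phi_n^\omega(x_0)$ and $\nu$-mass $\lesssim(r/\phi_n^\omega(x_0))^{d_\nu}$; summing these, the exponential proliferation of branches is exactly offset by the exponential shrinking $\phi_n^\omega\approx e^{nL/\dim X}$ of the pieces because $d_\nu\cdot(L/\dim X)=h_\nu(\mathbb S,\mathbb P)$, leaving $\lesssim r^{d_\nu}$. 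Granting this, the right-hand side above is $\lesssim n_1(\vep)\vep^{\epsilon d_\nu/2}\to0$, and summable along $\vep_m=e^{-m}$; Borel--Cantelli and interpolation in $\vep$ then give $\liminf_{\vep\to0}\mathcal T^\omega(B_\vep(x))/(-\log\vep)\ge\dim X/L$ for $\nu$-a.e.\ $x$, and with the upper bound this proves \eqref{eq:mc2a}. I expect the combinatorics of this last measure estimate --- controlling how the $\nu$-mass distributes among the inverse branches of $f_\omega^n$, and making the exponential rates balance --- to be the main technical obstacle; everything else adapts the single-map arguments of \cite{ACS03,Var09} to the fibred context.
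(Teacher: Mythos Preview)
Your argument for \eqref{eq:SemigroupUpper} via the covering/path-lifting estimate is correct and in fact cleaner than the paper's route through specification; it holds for \emph{every} $x$ and confirms your remark that $h_\nu(\mathbb S,\mathbb P)>0$ is irrelevant there. Your upper bound for \eqref{eq:mc2a} is also essentially the paper's: both use Birkhoff to identify the typical expansion rate $\chi=L/\dim X$ and then a specification/covering step to force a return after $\approx(-\log\vep)/\chi$ iterates.

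The lower bound for \eqref{eq:mc2a}, however, has a genuine gap. Your key step is the estimate $\nu(\{x:d(x,f_\omega^n(x))<r\})\lesssim r^{d_\nu}$, justified by summing $\nu$-masses $\lesssim(r/\phi_n^\omega(x_0))^{d_\nu}$ over the fixed points $x_0\in\mathrm{Fix}(f_\omega^n)$ and asserting that ``the exponential proliferation of branches is exactly offset by the exponential shrinking\dots because $d_\nu\cdot(L/\dim X)=h_\nu(\mathbb S,\mathbb P)$''. But the number of branches of $f_\omega^n$ is $\prod_{j\le n}\deg g_{\omega_j}$, which along $\mathbb P_{\underline a}$-typical $\omega$ grows like $e^{n\sum_i a_i\log\deg g_i}$ --- the fibred \emph{topological} entropy rate, not $h_\nu(\mathbb S,\mathbb P)$. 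Since $\phi_n^{d_\nu}\approx e^{n h_\nu}$, your sum is of order $r^{d_\nu}\exp\!\big(n(\sum_i a_i\log\deg g_i-h_\nu)\big)$, which blows up exponentially in $n$ unless $\nu$ happens to be the fibred measure of maximal entropy. A second issue is that the uniform Frostman bound $\nu(B_\rho)\lesssim\rho^{d_\nu}$ you invoke on each piece is not available for an arbitrary ergodic invariant $\nu$; only the $\nu$-a.e.\ asymptotic $\nu(B_\rho(x))=\rho^{d_\nu+o(1)}$ holds.

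The paper avoids both problems by working first with \emph{dynamic} balls $B^\omega_\delta(x,n)$: Shannon--McMillan--Breiman for the random system (via \cite{Zhu}) gives $\nu(B^\omega_\delta(x,n))\approx e^{-nh_\nu}$, and a partition-counting Borel--Cantelli argument \`a la \cite{Var09} yields $\liminf_n \mathcal T^\omega(B^\omega_\delta(x,n))/n\ge1$ --- here the relevant count is of \emph{partition elements that recur}, bounded by $e^{k(h_\nu+\xi)}$, so the correct rate $h_\nu$ (not the degree) appears automatically. Only afterwards does conformality enter, through the a.e.\ inclusions $B_{e^{-(\chi+\varepsilon)n}\delta}(x)\subset B^\omega_\delta(x,n)\subset B_{e^{-(\chi-\varepsilon)n}\delta}(x)$, which convert the dynamic-ball statement into the metric-ball limit \eqref{eq:mc2a}. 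If you want to keep your direct metric-ball strategy, you would need an independent proof of a uniform-in-$n$ bound $\nu(\{x:d(x,f_\omega^n(x))<r\})\le Cr^\alpha$ for some $\alpha>0$ and general ergodic $\nu$; this is a nontrivial ``short returns'' estimate and is not a consequence of the balancing you describe.
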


We note that, 
in the special case of finitely generated semigroups of conformal expanding maps for which $|\det Dg_i(\cdot)|$ is constant for every $1\le i \le p$, the expression in the denominator of the right hand-side of \eqref{eq:mc2a} coincides with the quenched pressure of the skew product $\mathcal{F}_G$ with respect to the null observable and the random walk $\mathbb{P}_{\underline a}$ (cf. definition in \cite{Baladi}), and this is bounded above by the topological entropy of the semigroup action with respect to $\mathbb{P}_{\underline{a}}$ (cf. definition in \cite{CRV}). Consequently, in this setting, for $\mathbb P_{\underline a}$-almost every $\omega$ and $\nu$-almost every $x\in X$, we obtain
\begin{equation*}
\lim_{\delta\,\to\, 0} \,\frac{ \mathcal{T}^\omega (B_\delta(x))}{-\log \vep}	
	\ge \frac{\dim X}{h_{\text{top}}(\mathbb{S},\mathbb{P}_{\underline{a}})}>0.
\end{equation*}

\subsubsection{\textbf{\emph{Ergodic optimization}}}

Our last result, inspired by \cite{Je}, deals with the relation between the maximum hitting frequency, the essential
maximal mean return time and the size of a set when measured by different measures. For the required definitions and the proof we refer the reader to Section~\ref{se:RQR}.

\begin{maintheorem}\label{Thm:E}
Let $\mathbb{S}$ be a finitely generated free semigroup action with generators $G_1=\{id, g_1, \dots, g_p\}$ and $\mathbb{P}$ be a Borel $\sigma$-invariant probability measure on $\Sigma^+_p$.
For every closed set $A\subset X$ there exists a marginal $\nu$ on $X$ 
such that
$$\mathbb{P}-\text{esssup}\sup_{x\,\in\, X} \limsup_{n\,\to\,+\infty}\,\frac{\sharp\,\{0\leq i\leq n-1: f_\omega^i(x)\in A\}}{n} = \nu(A).$$
\end{maintheorem}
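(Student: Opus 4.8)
The plan is to realize the left-hand quantity as a dynamical birkhoff-average supremum over the skew product $\mathcal{F}_G$ on $\Sigma_p^+ \times X$ and then invoke a Poincaré-recurrence/ergodic-optimization argument à la Jenkinson. First I would fix the closed set $A\subset X$ and introduce, on $\Sigma_p^+\times X$, the indicator observable $\phi = \mathbf{1}_{\Sigma_p^+\times A}$, so that $\frac1n\sum_{i=0}^{n-1}\phi(\mathcal{F}_G^i(\omega,x)) = \frac1n\,\sharp\{0\le i\le n-1 : f_\omega^i(x)\in A\}$ (using $\mathcal{F}_G^i(\omega,x)=(\sigma^i\omega, f_\omega^i(x))$ as recorded after \eqref{de.Skew product}). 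Next I would observe that the $\mathbb{P}$-essential supremum over $\omega$ together with the supremum over $x$ of the $\limsup$ of these averages equals
$$
\sup\Big\{ \limsup_{n\to\infty}\tfrac1n\textstyle\sum_{i=0}^{n-1}\phi(\mathcal{F}_G^i(\omega,x)) \;:\; (\omega,x)\in \Gamma \Big\}
$$
for a suitable $\mathcal{F}_G$-invariant full-$\mathbb{P}$-measure set $\Gamma\subset\Sigma_p^+\times X$; the point is that passing from $\mathbb{P}$-esssup to sup is legitimate once we only ask for one admissible $\nu$ rather than control on a null set of $\omega$.

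The core step is then the standard fact from ergodic optimization: for a continuous (here merely upper semicontinuous, since $A$ is closed) observable on a compact system, the supremum of upper Birkhoff averages over all points equals $\sup_{m} \int \phi\, dm$, the supremum being taken over $\mathcal{F}_G$-invariant probability measures $m$, and this supremum is attained by some $m_*$ (an argument via the Krylov–Bogolyubov construction of empirical measures; upper semicontinuity of $\mathbf 1_A$ is exactly what is needed to pass the $\limsup$ through in the right direction). The key linkage is that I want the optimizing measure to project to the Bernoulli/random-walk structure on $\Sigma_p^+$ in the first coordinate and to a $G$-invariant measure $\nu\in\mathcal{M}_G$ on $X$ in the second — but here I only need the marginal $\nu$ on $X$ and the equation $\int\phi\,dm_* = \nu(A)$. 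Because $A$ is closed, $\mathbf 1_A$ is u.s.c., the second marginal $\nu := (\pi_X)_*m_*$ satisfies $\int_{\Sigma_p^+\times X}\mathbf 1_{\Sigma_p^+\times A}\,dm_* = \nu(A)$, giving the claimed identity once we check $\nu$ can be taken in $\mathcal M_G$; for that I would push $m_*$ by the skew product to force $g_i$-invariance of the marginal, exactly as invariance of $m_*$ under $\mathcal{F}_G$ forces the $X$-marginal of $(g_{\omega_1})_*$-type pushforwards to match $\nu$, and averaging over the symbol supplies $g_i$-invariance for each $i$.

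**Main obstacle.** The delicate point is the interchange of the $\mathbb{P}$-essential supremum over $\omega$ with the optimization, i.e. producing a \emph{single} marginal $\nu$ that works simultaneously for the whole essential-sup structure rather than a $\mathbb{P}$-typical $\omega$. I expect to handle this by building empirical measures from an approximately optimal sequence of pairs $(\omega^{(k)}, x^{(k)})$ realizing values tending to the left-hand side (such pairs exist by definition of esssup and sup), extracting a weak-$*$ limit $m_*$ which is automatically $\mathcal{F}_G$-invariant, and noting $\int\phi\,dm_*$ is at least the limiting value because $\phi$ is u.s.c. — while the reverse inequality $\nu(A)=\int\phi\,dm_*\le \mathbb{P}\text{-esssup}\sup_x\limsup_n(\cdots)$ comes from evaluating along $\mathbb{P}\times\nu$-generic points of the ergodic components of $m_*$ via Birkhoff's theorem, after checking those components still project correctly onto $\Sigma_p^+$. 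Verifying that at least one ergodic component of $m_*$ has the right $\Sigma_p^+$-marginal (so that its $X$-marginal is genuinely in $\mathcal{M}_G$ and the $\mathbb{P}$-esssup sees it) is where I would spend the most care; the $\sigma$-invariance of $\mathbb{P}$ assumed in the statement is what keeps this from collapsing.
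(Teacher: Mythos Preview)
Your plan is essentially the paper's: rewrite the hitting frequency as the Birkhoff average of $\phi=\mathbf 1_{\Sigma_p^+\times A}$ under the skew product $\mathcal F_G$, obtain one inequality via empirical measures and upper semicontinuity of $\phi$ (closedness of $A$), the other via Birkhoff's theorem on ergodic components, and take the $X$-marginal of an optimizing $\mathcal F_G$-invariant measure. The paper packages this by introducing
\[
\alpha_{\mathbb P}(A)=\sup\{\mu(\Sigma_p^+\times A):\mu\in\mathcal M_{\mathcal F_G},\ \pi_*\mu=\mathbb P\},
\]
proving it is attained (Lemma~\ref{lemma:usc}) and that $\gamma_{\mathbb P}(A)=\alpha_{\mathbb P}(A)$ (Proposition~\ref{prop:hitting}). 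The constraint $\pi_*\mu=\mathbb P$ is exactly what resolves your ``main obstacle'': for $\omega$ in the ergodic basin of $\mathbb P$, any weak-$*$ accumulation point of $\frac1{n_k}\sum_{i<n_k}\delta_{\mathcal F_G^i(\omega,x)}$ automatically has $\Sigma_p^+$-marginal $\mathbb P$, so the optimizer lies in the constrained set and its generic points are seen by the $\mathbb P$-esssup. This is cleaner than your proposed route of first optimizing over all invariant measures and then repairing the marginal.

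One point you should drop: the attempt to place $\nu$ in $\mathcal M_G$. The theorem only asks that $\nu$ be the $X$-marginal of some $\mu\in\mathcal M_{\mathcal F_G}$, and such a marginal is in general only \emph{stationary} (e.g.\ $\nu=\sum_i a_i\,(g_i)_*\nu$ in the Bernoulli case), not invariant under each $g_i$ separately. Your claim that ``averaging over the symbol supplies $g_i$-invariance for each $i$'' is incorrect: $\mathcal F_G$-invariance of $\mu$ gives $\int\psi\,d\nu=\int\psi(g_{\omega_1}(x))\,d\mu(\omega,x)$, which is an average of the $(g_i)_*\nu$, not equality with any single one. Since this extra property is neither needed nor claimed, simply omit that step.
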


\section{Proof of Theorem~\ref{Thm:A}}\label{se:Recurrence}

Let $\mathbb{S}: G\times X \to X$ be a semigroup action generated by a finite set $\{g_1,g_2,\ldots,g_p\}$ of $p \geq 2$ dynamics acting on a compact metric space $X$ endowed with a Borel probability measure $\nu$ which is invariant by $g_i$ for every $i\in \{1,2,\ldots,p\}$. Consider the shift map $\sigma$ on the full unilateral space of sequences $\Sigma_p^+ = \{1,2,\ldots,p\}^\mathbb{N}$ and a $\sigma$-invariant Borel probability measure $\mathbb{P}$ in $\Sigma_p^+$. The corresponding skew product $\mathcal{F}_G : \Sigma_p^+ \times X \rightarrow \Sigma_p^+ \times X$ has been defined in \eqref{de.Skew product} and preserves the probability measure $\mathbb{P} \times \nu$.

\subsection{Random Ergodic Theorem}\label{thm:RET}

Let us recall a generalized ergodic theorem from \cite{Halmos}. Let $Z$ and $X$ be measure spaces with probability measures $\mathbb{P}$ and $\nu$, respectively. Suppose that $U: Z \to Z$ is an $\mathbb{P}$ preserving transformation and  denote by $Q: Z\times X \to Z\times X$ the skew product defined by $Q(z,x)=(U(z),T_z(x))$, where the family $\left(T_z\right)_{z \,\in\, Z}$ is assumed to be measurable and,
for each $z \in Z$, $T_z:X \to X$ is a $\nu$-measure preserving map. The skew product $Q$ is measurable and preserves the probability measure $\mathbb{P} \times \nu$. Write $T^0_z=T_z$ and, for $k \in \mathbb{N}$, $T^k_z= T_{U^k(z)}\,\ldots\,T_{U(z)}\,T_z$. It is not hard to show, using Birkhoff's Ergodic Theorem for the skew product $Q$ and $\mathbb P \times\nu$, that, if $\varphi$ is a $\nu$-integrable function in $X$, then there exists a $(\mathbb P \times \nu)$-full measure subset $E \subset Z \times X$ such that, for every $(z,x) \in E$, the averages
$$\Bigg(\frac{1}{n}\,\sum_{j=0}^{n-1}\,\varphi(T^j_{z}(x))\Bigg)_{n \,\in\, \mathbb{N}}$$
converge to a $\nu$-integrable function $\varphi_z^*$ so that
\begin{eqnarray*}
\int \varphi(x)\,d\nu(x) &=& \int \int  \psi(z,x)\,d\mathbb{P} \, d\nu = \int \int  \psi^*(z,x)\,d\mathbb{P} \, d\nu =  \int \int  \varphi_z^*(x)\,d\mathbb{P}(z) \, d\nu(x).
\end{eqnarray*}
Then, Fubini theorem ensures that, for $\mathbb{P}$-almost every $z \in Z$, the set $E^z$ of points $x \in X$ whose averages $\left(\frac{1}{n}\,\sum_{j=0}^{n-1}\,\varphi(T^j_{z}(x))\right)_{n \,\in\, \mathbb{N}}$ converge to $\varphi_z^*$ has full $\nu$ measure. If, moreover, $\mathbb{P} \times \nu$ is ergodic with respect to the skew product $Q$, then $\varphi_z^*(x) =\int \varphi d\nu$ for $\mathbb{P}$-almost every $z \in Z$ and $\nu$-almost every $x \in X$.

\subsection{Recurrence \emph{via} the skew product}\label{se:recurrence_1}

To get a version of Poincar\'e's Recurrence Theorem for a semigroup action, we will start deducing recurrent properties of stationary non-autonomous sequences of dynamical systems and fibred maps.

\begin{proposition}\label{prop:recskew}
Consider the skew product $\cF_G$, a $\sigma$-invariant probability measure $\mathbb{P}$ on $\Sigma_p^+$ 
and a Borel probability measure $\nu$ in $X$ invariant by every generator in $G$. For any measurable subset $A \subset X$ the following properties hold:
\begin{enumerate}
\item For $\mathbb{P}$-almost every $\omega \in \Sigma_p^+$, the set of the points $x \in A$ whose orbit
	$\left(f^k_\omega(x)\right)_{k \,\in\, \mathbb{N}}$ returns to $A$ infinitely often has full $\nu$-measure in $A$.	
\item For every $\omega \in \Sigma_p^+$, the set of points $x \in A$ for which there are positive integers $n \geq k$ satisfying $g_{\omega_n}g_{\omega_{n-1}}\ldots g_{\omega_k}(x) \in A$ has full $\nu$-measure in $A$.
\item If $\nu$ is ergodic with respect to one of the generators, say $g_1$, then there exists a subset $\Omega \subset \Sigma_p^+$ with $\mathbb{P}(\Omega)>0$ such that for every $\omega \in \Omega$ there is a set $Y_\omega \subset X$ with $\nu(Y_\omega)=1$ so that, for any $x \in Y_\omega$, we may find $\ell=\ell(\omega,x) \in \mathbb{N}$ such that the orbit $\left(f^k_\omega(g_1^\ell(x))\right)_{k \,\in\, \mathbb{N}}$ of $g_1^\ell(x)$ enters infinitely many times in $A$.
\item If $\mathbb{P} \times \nu$ is ergodic with respect to $\mathcal{F}_G$, then for $\mathbb{P}$-almost every $\omega \in \Sigma_p^+$ the orbit $\left(f^k_\omega(x)\right)_{k \,\in\, \mathbb{N}}$ of $\nu$-almost every $x \in X$ enters infinitely many times in $A$.
\end{enumerate}
\end{proposition}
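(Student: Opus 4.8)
The plan is to deduce all four items from the ergodic theory of the skew product $\cF_G$ on $\Sigma_p^+\times X$, which preserves $\mathbb{P}\times\nu$, together with one elementary non-autonomous recurrence lemma for item (2); throughout I may and will assume $\nu(A)>0$. For item (1) I would apply the infinitely-often version of the Poincar\'e Recurrence Theorem to the $\cF_G$-invariant probability $\mathbb{P}\times\nu$ and to the cylinder $\tilde A:=\Sigma_p^+\times A$, of $(\mathbb{P}\times\nu)$-measure $\nu(A)$. Since $\cF_G^n(\omega,x)=(\sigma^n\omega,f_\omega^n(x))$, the orbit of $(\omega,x)$ returns to $\tilde A$ exactly when $f_\omega^n(x)\in A$; hence $(\mathbb{P}\times\nu)$-almost every $(\omega,x)\in\tilde A$ satisfies $f_\omega^n(x)\in A$ for infinitely many $n$. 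Equivalently, the measurable set $\tilde W:=\bigcap_{m\ge1}\bigcup_{n\ge m}\cF_G^{-n}(\tilde A)$ has $(\mathbb{P}\times\nu)(\tilde A\setminus\tilde W)=0$, and Fubini's theorem then yields that for $\mathbb{P}$-almost every $\omega$ the section $W_\omega:=\{x\in A: f_\omega^n(x)\in A\text{ for infinitely many }n\}$ has full $\nu$-measure in $A$. Item (4) follows from the Random Ergodic Theorem recalled in Subsection~\ref{thm:RET}, applied to $\varphi=\mathbf{1}_A$: under the ergodicity of $\mathbb{P}\times\nu$ for $\cF_G$, the averages $\frac1n\sum_{j=0}^{n-1}\mathbf{1}_A(f_\omega^j(x))$ converge to $\nu(A)>0$ for $\mathbb{P}$-almost every $\omega$ and $\nu$-almost every $x$, which forces $f_\omega^j(x)\in A$ infinitely often.

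Item (2) has to hold for every $\omega$, so the shift on $\Sigma_p^+$ is of no help; instead I would prove the self-contained statement that for any sequence $(T_n)_{n\ge1}$ of $\nu$-preserving measurable maps of $X$ and any measurable $A$, the set of $x\in A$ for which there exist $1\le k\le n$ with $T_nT_{n-1}\cdots T_k(x)\in A$ has full $\nu$-measure in $A$; applying this with $T_n=g_{\omega_n}$, each a generator and hence $\nu$-preserving, gives item (2). To prove the statement, let $D$ be the complementary ``no windowed return'' subset of $A$, set $S_0=\mathrm{id}$ and $S_i=T_i\cdots T_1$, and observe that the sets $S_i^{-1}(D)$, $i\ge0$, are pairwise disjoint: if $x\in S_i^{-1}(D)\cap S_j^{-1}(D)$ with $i<j$, then $S_i(x)\in D\subset A$ while $T_j\cdots T_{i+1}\big(S_i(x)\big)=S_j(x)\in D\subset A$, contradicting the definition of $D$ at the point $S_i(x)$ with window $[i+1,j]$. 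Since each $S_i$ preserves $\nu$ we have $\nu(S_i^{-1}(D))=\nu(D)$ for every $i$, and as $\nu$ is a probability this forces $\nu(D)=0$. Note that invertibility of the $g_i$ is never used.

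For item (3), assume $\nu$ ergodic with respect to $g_1$ and take $\Omega$ to be the full $\mathbb{P}$-measure set produced by item (1), so that $\nu(W_\omega)=\nu(A)>0$ for $\omega\in\Omega$. Then $Y_\omega:=\bigcup_{\ell\ge1}g_1^{-\ell}(W_\omega)$ satisfies $g_1^{-1}(Y_\omega)\subset Y_\omega$ and hence, $\nu$ being $g_1$-invariant, coincides with $g_1^{-1}(Y_\omega)$ modulo $\nu$; ergodicity of $\nu$ for $g_1$ and $\nu(Y_\omega)\ge\nu(W_\omega)>0$ give $\nu(Y_\omega)=1$. For $x\in Y_\omega$ there is $\ell=\ell(\omega,x)$ with $g_1^\ell(x)\in W_\omega\subset A$, and then $\big(f_\omega^k(g_1^\ell(x))\big)_{k\in\mathbb{N}}$ enters $A$ infinitely often by the definition of $W_\omega$, as desired. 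I expect the only genuinely non-routine point of the proposition to be item (2): one has to realise that the flexible window $T_n\cdots T_k$ is needed — the compositions $f_\omega^n$ themselves need not be recurrent along a fixed $\omega$ — and then find the disjointness of the preimages $S_i^{-1}(D)$; the remaining items are standard manipulations with Poincar\'e recurrence, Fubini and ergodicity.
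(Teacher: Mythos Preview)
Your proof is correct and follows essentially the same route as the paper: Poincar\'e recurrence for $\cF_G$ plus Fubini for (1), the disjoint-preimage argument for (2), the Random Ergodic Theorem for (4), and the $g_1$-ergodic sweep $\bigcup_\ell g_1^{-\ell}(\cdot)$ for (3). The one minor difference is that for (3) you feed the set $W_\omega$ from item (1) into the sweep, whereas the paper first builds a positive-frequency set $C^\omega$ via the Random Ergodic Theorem; your shortcut is cleaner and even gives $\mathbb{P}(\Omega)=1$ rather than merely $\mathbb{P}(\Omega)>0$.
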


Some comments are in order. Items (1) and (4) provide expected results on the recurrence of almost every point with respect to almost every random path. 
Item (2) indicates that, for any stationary sequence of maps, recurrence surely happens up to a convenient shifting of the orbits. Item (3) imparts a dual statement by replacing this shifting by a finite transient of some generator $g_1$, which is assumed to be ergodic with respect to $\nu$. We also remark that, in the case of finitely generated free abelian semigroups, the generators commute and probability measures invariant by any generator do exist.

\begin{proof}
Given $k \in \mathbb{N}$ and $\omega=\omega_1 \omega_2 \ldots \in \Sigma_p^+$, recall that we write $f^k_\omega= g_{\omega_k} \,g_{\omega_{k-1}} \ldots g_{\omega_1}.$ Let $A$ be a measurable subset of $X$ with $\nu(A)>0$ and consider $\Sigma_p^+ \times A$. As the probability measure $\mathbb{P} \times \nu$ is invariant by the skew product $\cF_G$ and $(\mathbb{P} \times \nu)(\Sigma_p^+ \times A)=\nu(A)>0$, by Poincar\'e's Recurrence Theorem there is a subset $E \subset \Sigma_p^+ \times A$ with $(\mathbb{P} \times \nu)(E)=\nu(A)>0$ such that every $(\omega,x) \in E$ returns to $\Sigma_p^+ \times A$ infinitely often by the iteration of $\mathcal{F}_G$. Observe now that
\begin{equation}\label{eq:recskew}
\mathcal{F}_G^k(\omega,x) = (\sigma^k(\omega), g^k_\omega(x)) \in \Sigma_p^+ \times A
	\quad \Leftrightarrow \quad f^k_\omega(x) \in A
\end{equation}
so the property describing the set $E$ informs that, for every $(\omega,x) \in E$, there are infinitely many values of $k\ge 1$ such that $f^k_\omega(x) \in A$. Besides, by Fubini-Tonelli's Theorem
we have
$$\nu(A)=(\mathbb{P} \times \nu)(E)= \int_X\, \mathbb{P}(E^x) \, d\nu(x) =  \int_{\Sigma_p^+}\, \nu(E^{\omega}) \, d\mathbb{P}(\omega),$$
where $E^x=\{\omega \in \Sigma_p^+ : (\omega, x) \in E\}$ and $E^\omega=\{x \in A : (\omega, x) \in E\}$. Thus, for
$\mathbb{P}$-almost every $\omega \in \Sigma_p^+$, we must have $\nu(E^{\omega})=\nu(A)$. This completes the proof of item (1).

To prove item (2), we will pursue another argument without summoning up the skew product, aiming the recurrence by the (possibly non-generic) random orbits $\left(f^k_\omega\right)_{k \,\in\, \mathbb{N}}$. Given $\omega=\omega_1\omega_2\omega_3 \ldots \in \Sigma_p^+$, write
$$B_\omega=\bigcap_{k \,\ge\, 1} \bigcap_{n \,\geq\, k} \,
	\{x \in A: g_{\omega_n}g_{\omega_{n-1}}\ldots g_{\omega_k}(x) \notin A\}.$$
Points in $B_\omega$ are in $A$ but never return to $A$ by any concatenation of dynamics given by the sequences $(g_{\omega_j})_{j\,\ge\, k}$, for all $k\ge 1$. We claim that $\{(g_{\omega_j} \dots g_{\omega_1})(B_\omega)\}_{j\,\ge\, 1}$
defines a family of pairwise disjoint subsets of $X$. Indeed, given positive integers $m > n$, if $(g_{\omega_m}g_{\omega_{m-1}}\ldots g_{\omega_1})^{-1}(B_\omega) \cap (g_{\omega_n}g_{\omega_{n-1}}\ldots g_{\omega_1})^{-1}(B_\omega) \neq \emptyset$, then there would exist $x \in X$ such that $z=g_{\omega_n}g_{\omega_{n-1}}\ldots g_{\omega_1}(x) \in B_\omega$ as well as $g_{\omega_m}g_{\omega_{m-1}}\ldots g_{\omega_{n+1}}(z) \in B_\omega$, which contradicts the definition of $B_\omega$. As $\nu$ is invariant by $g_i$ for every $i \in \mathcal{P}$, it is also invariant by $g_{\omega_n}g_{\omega_{n-1}}\ldots g_{\omega_1}$ for every $n \in \mathbb{N}$. Therefore,
$$\sum_{n=1}^\infty \,\nu(B_\omega) = \sum_{n=1}^\infty \,\nu\left((g_{\omega_n}g_{\omega_{n-1}}\ldots g_{\omega_1})^{-1}(B_\omega)\right) = \nu\left(\bigcup_{n=1}^\infty \, (g_{\omega_n}g_{\omega_{n-1}}\ldots g_{\omega_1})^{-1}(B_\omega)\right) \leq 1$$
and so $\nu(B_\omega)=0$. Thus, for $\nu$-almost every $x\in A$, there exists $n\ge k \ge 1$ such that $g_{\omega_n}g_{\omega_{n-1}}\ldots g_{\omega_k}(x) \in A$. It is not hard to adapt the previous argument to show that, for every $\omega \in \Sigma_p^+$, there exists a full $\nu$-measure subset of points $x \in A$ which exhibit infinitely many returns to $A$ (that is, which admit infinitely many values $n_\ell \ge k_\ell$ such that $g_{\omega_{n_\ell}}g_{\omega_{n_\ell-1}}\ldots g_{\omega_{k_\ell}}(x) \in A$).

We now focus on item (3). As the probability measure $\mathbb{P} \times \nu$ is invariant by the skew product $\mathcal{F}_G$, we may apply to $\varphi=\chi_A$ the Random Ergodic Theorem quoted in Subsection~\ref{thm:RET}. This way, we conclude that, for $\mathbb{P}$-almost every $\omega \in \Sigma_p^+$, the frequency of visits to $A$ given by
$$
\frac{1}{n}\,\sharp\left\{0\leq j\leq n-1: f^j_{\omega}(x) \in A\right\}
	= \frac{1}{n}\,\sum_{j=0}^{n-1}\,\chi_A(f^j_{\omega}(x))
$$
is convergent for $\nu$-almost every $x \in X$. As $\nu(A)>0$, we may add that, for $\mathbb{P}$-almost every $\omega \in \Sigma_p^+$ and $\nu$-almost every $x \in A$, those averages converge to the value at $x$ of a $\nu$-integrable function $\varphi_\omega^*$ that satisfies
$\int \int \varphi_\omega^*(x)\,d\nu(x)\,d\mathbb{P}(\omega)= \int \varphi(x)\,d\nu(x)= \nu(A) >0.$
Consequently, the set $C \subset \Sigma_p^+ \times X$ of the points $(\omega,x)$ for which we have $\varphi_\omega^*(x)>0$ satisfies $(\mathbb{P} \times \nu)(C)>0$. Therefore, for every $(\omega,x) \in C$, the point $f^k_{\omega}(x)$ is in $A$ for infinitely many choices of $k \in \mathbb{N}$. By Fubini-Tonelli's Theorem, we get
$(\mathbb{P} \times \nu)(C) = \int_{X}\, \nu(C^\omega) \, d\mathbb{P}(\omega) >0$
where
$C^\omega=\{x \in X : (\omega, x) \in C\} = \{x \in X : f^k_{\omega}(x) \in A \,\,\text{for infinitely many }\, k \in \mathbb{N}\}.$
Thus, there must exist a subset $\Omega \subset  \Sigma_p^+$ with $\mathbb{P}(\Omega)>0$ such that, for every $\omega \in \Omega$, we have $\nu(C^\omega)>0$. Notice, however, that, if $\nu(A)<1$, the previous property is not enough for us to be sure whether $\nu(A\cap C^\omega) > 0$ for some relevant subset of elements in $\Omega$. Nevertheless, under the assumption that $\nu$ is ergodic by one of the generators, say $g_1$, we may take for each $\omega \in \Omega$ the set
$$Y_\omega= \bigcup_{k \,\in\, \mathbb{N}}\, g_1^{-k}(C^\omega)$$
and conclude that, as $Y_\omega \subset g_1^{-1}(Y_\omega)$, we have $\nu(Y_\omega)=1$. That is, for each $\omega \in \Omega$ and every $x \in Y_\omega$, there is $k \in \mathbb{N}$ such that $g_1^k(x) \in C^\omega$.

Concerning item (4), observe that, if $\mathbb{P} \times \nu$ is ergodic with respect to $\mathcal{F}_G$, then $\varphi_{\omega}^*=\int \varphi d\nu=\nu(A)>0$ for $\mathbb{P}$-almost every $\omega \in \Sigma_p^+$  and $\nu$-almost every $x \in X$. That is, $(\mathbb{P} \times \nu)(C)=1$ and there exists a subset $\Omega \subset  \Sigma_p^+$ with $\mathbb{P}(\Omega)=1$ such that, for every $\omega \in \Omega$, we have $\nu(C^\omega)=1$. So, without assuming the ergodicity of $\nu$ with respect to one of the generators, the first part of the argument in the previous paragraph shows that, for $\mathbb{P}$-almost every $\omega \in \Sigma_p^+$ and $\nu$-almost every $x \in X$, the orbit $\left(f^m_\omega(x)\right)_{m \,\in\, \mathbb{N}}$ of $x$ returns infinitely many times to $A$. This completes the proofs of Proposition~\ref{prop:recskew} and Theorem~\ref{Thm:A}.
\end{proof}

\begin{remark}\label{rmk:countable}
The full measure subset mentioned in Proposition~\ref{prop:recskew} depends on the sequential dynamical system ${((f_\omega^n)_{\omega \,\in\, \Sigma^+_p})}_{n\,\ge\, 1}$. Nevertheless, the argument used in its proof contains a stronger statement if the semigroup $G$ is finite or countable (as, for instance $\mathbb Z^p_+$): \emph{if $A$ is a positive $\nu$-measure subset of $X$, then there exists $B\subset A$ such that, for every $\omega \in \Sigma^+_p$, there are positive integers $n \geq k$ such that $g_{\omega_n}g_{\omega_{n-1}}\ldots g_{\omega_k}(x) \in A$.}
\end{remark}

\begin{example}
Let $X$ be a compact connected Riemannian manifold, $m$ stand for the volume measure in $X$, $A\subset X$ be an open set, $\text{Diff}^{\,1}_m(X)$ denote the group of $C^1$ volume preserving diffeomorphisms on $X$ and $G_1\subset \text{Diff}^{\,1}_m(X)$ be a finite set. Then, for any sequence $(f_n)_{n \,\in\, \mathbb{N}}$ in $G_1^{\mathbb N}$, there exists an $m$-full measure subset of points $x\in A$ for which we may find infinitely many positive integers $k_i(x) < \ell_i(x)$ such that $f_{\ell_i} \circ \cdots \circ f_{k_i}(x) \in A$.
\end{example}

\section{Proof of Theorem~\ref{Thm:B}}\label{se:Kac}

Throughout this section we will study recurrence properties for semigroup actions using ergodic information about the skew product $\cF_G$ and the measure $\mathbb{P} \times \nu$.  
Take $\nu \in \mathcal{M}_G$ and 
a Borel $\sigma$-invariant probability measure $\mathbb{P}$. The corresponding skew product $\mathcal{F}_G : \Sigma_p^+ \times X \rightarrow \Sigma_p^+ \times X$ has been defined in \eqref{de.Skew product} and preserves the probability measure $\mathbb{P} \times \nu$. The next result is a quenched version of the expected first return time and provides an averaged fibred Kac's Lemma, from which Theorem~\ref{Thm:B} is a direct consequence.

\begin{proposition}\label{prop:Kacskew} Assume that $\mathbb{P} \times \nu$ is ergodic with respect to the skew product $\cF_G$. Then, given a measurable set $A \subset X$ with $\nu(A)>0$, for $\mathbb{P}$-almost every $\omega$ in $\Sigma_p^+$ one has
$$\lim_{k \,\to\, + \infty}\,\frac{1}{k}\,\sum_{j=0}^{k-1}\,\int_A \,n_A^{\sigma^j(\omega)}(x) \, d\nu_A(x) = \frac1{\nu(A)}.$$
If, in addition, $\mathbb{P}$ is mixing, then, for $\mathbb{P}$-almost every $\omega$,
$$\lim_{k \,\to\, +\infty}\,\int_A \,n_A^{\sigma^k(\omega)}(x) \, d\nu_A(x) = \frac1{\nu(A)}$$
\end{proposition}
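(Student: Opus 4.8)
The plan is to reduce the fibred Kac identity to the classical Kac Lemma applied to the skew product $\mathcal{F}_G$ on $\Sigma_p^+\times X$ with the ergodic measure $\mathbb{P}\times\nu$. The first step is to relate the fibred first return time $n_A^\omega(x)$ to the first return time of the skew product to the set $\widehat A:=\Sigma_p^+\times A$. By \eqref{eq:recskew}, $f_\omega^k(x)\in A$ if and only if $\mathcal{F}_G^k(\omega,x)\in\widehat A$, so the first return time $n_{\widehat A}(\omega,x)$ of the point $(\omega,x)$ to $\widehat A$ under $\mathcal{F}_G$ is exactly $n_A^\omega(x)$ for $(\mathbb{P}\times\nu)$-almost every $(\omega,x)\in\widehat A$. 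Since $\mathbb{P}\times\nu$ is ergodic for $\mathcal{F}_G$ and $(\mathbb{P}\times\nu)(\widehat A)=\nu(A)>0$, the classical Kac Lemma \eqref{eq:Kac formula} gives
$$\int_{\widehat A} n_{\widehat A}(\omega,x)\, d(\mathbb{P}\times\nu)_{\widehat A}(\omega,x)=\frac{1}{(\mathbb{P}\times\nu)(\widehat A)}=\frac1{\nu(A)},$$
where $(\mathbb{P}\times\nu)_{\widehat A}=\frac{1}{\nu(A)}(\mathbb{P}|_{\Sigma_p^+})\times(\nu|_A)$. By Fubini this reads $\int_{\Sigma_p^+}\big(\int_A n_A^\omega(x)\,d\nu_A(x)\big)\,d\mathbb{P}(\omega)=\frac1{\nu(A)}$; in particular $\Phi(\omega):=\int_A n_A^\omega(x)\,d\nu_A(x)$ is a well-defined, $\mathbb{P}$-integrable, nonnegative function on $\Sigma_p^+$ with $\int\Phi\,d\mathbb{P}=\frac1{\nu(A)}$.

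The second step is to observe the shift-covariance $n_A^{\sigma^j(\omega)}(x)=n_{\widehat A}(\mathcal{F}_G^{?}\ldots)$ — more precisely, applying the same identification to the shifted sequence, $\Phi(\sigma^j\omega)=\int_A n_A^{\sigma^j(\omega)}(x)\,d\nu_A(x)$. Thus the Cesàro averages in the first assertion are Birkhoff averages of the single function $\Phi\in L^1(\mathbb{P})$ along the map $\sigma$. Since $\mathbb{P}\times\nu$ being $\mathcal{F}_G$-ergodic forces $\mathbb{P}$ to be $\sigma$-ergodic, Birkhoff's Ergodic Theorem yields $\frac1k\sum_{j=0}^{k-1}\Phi(\sigma^j\omega)\to\int\Phi\,d\mathbb{P}=\frac1{\nu(A)}$ for $\mathbb{P}$-almost every $\omega$, which is exactly the first claim.

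For the second assertion, under the additional hypothesis that $\mathbb{P}$ is mixing, the goal is to upgrade Cesàro convergence of $\Phi(\sigma^k\omega)$ to genuine (pointwise a.e.) convergence to the constant $\frac1{\nu(A)}$. The natural route is to show that $\Phi$ is $\mathbb{P}$-almost everywhere equal to a constant, hence necessarily the constant $\frac1{\nu(A)}$: then $\Phi(\sigma^k\omega)$ is the constant for $\mathbb{P}$-a.e.\ $\omega$ and every $k$. To see that $\Phi$ is a.e.\ constant, note that $n_A^\omega(x)$ depends on $\omega$ only through the tail behaviour governed by the ergodic/mixing structure; the cleanest argument is that $\Phi$, being defined via the return-time to $\widehat A$, satisfies a relation tying $\Phi(\omega)$ to $\Phi(\sigma\omega)$ through the first-coordinate map $g_{\omega_1}$, and combined with mixing this will force $\Phi$ to be measurable with respect to the tail $\sigma$-algebra, which is $\mathbb{P}$-trivial by the Kolmogorov $0$–$1$ law available for mixing Bernoulli-type measures (note Theorem~\ref{Thm:B} only invokes this proposition for $\mathbb{P}=\mathbb{P}_{\underline a}$, where the tail is trivial). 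Once $\Phi$ is a.e.\ constant, its integral pins the value to $\frac1{\nu(A)}$ and we are done.

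\smallskip

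\noindent\emph{Main obstacle.} The delicate point is the second assertion: Cesàro convergence of $\Phi(\sigma^k\omega)$ never implies pointwise convergence for a general $L^1$ function, so one genuinely needs an a.e.-constancy (tail-triviality) argument for $\Phi$, and making the dependence of $n_A^\omega(x)$ on $\omega$ precise enough to conclude tail-measurability — while only assuming $\mathbb{P}$ mixing rather than Bernoulli — is the technical crux; it may be cleanest to prove it directly for $\mathbb{P}=\mathbb{P}_{\underline a}$, which is all that Theorem~\ref{Thm:B} requires.
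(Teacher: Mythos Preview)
Your treatment of the first assertion is correct and is exactly the paper's argument: identify $n_A^\omega(x)$ with the first return time of $(\omega,x)$ to $\widehat A=\Sigma_p^+\times A$ under $\mathcal{F}_G$, apply Kac's Lemma to the ergodic system $(\mathcal{F}_G,\mathbb{P}\times\nu)$ to obtain $\int_{\Sigma_p^+}\Phi\,d\mathbb{P}=\frac{1}{\nu(A)}$ for $\Phi(\omega)=\int_A n_A^\omega\,d\nu_A$, observe that ergodicity of $\mathbb{P}\times\nu$ forces $\mathbb{P}$ to be $\sigma$-ergodic, and conclude via Birkhoff's theorem applied to $\Phi\in L^1(\mathbb{P})$.

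Your proposed route for the second assertion, however, has a genuine gap. The claim that $\Phi$ is tail-measurable is false: for each $x$, the value $n_A^\omega(x)$ is determined by the \emph{initial} block $\omega_1,\dots,\omega_{n_A^\omega(x)}$, not by the tail of $\omega$, so Kolmogorov's $0$--$1$ law gives no information about $\Phi$. In fact $\Phi$ need not be a.e.\ constant even for Bernoulli $\mathbb{P}$, and the assertion itself fails in simple examples. Take $X=\mathbb{Z}/3\mathbb{Z}$ with normalized counting measure $\nu$, $g_1(x)=x+1$, $g_2(x)=x-1$, $A=\{0\}$, and $\mathbb{P}=\mathbb{P}_{(1/2,1/2)}$; the skew product is an irreducible aperiodic random walk extension, so $\mathbb{P}\times\nu$ is $\mathcal{F}_G$-ergodic. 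Here $\Phi(\omega)=n_A^\omega(0)$ equals $2$ iff $\omega_1\neq\omega_2$ and is $\ge 3$ otherwise, so $\Phi(\sigma^k\omega)=2$ iff $\omega_{k+1}\neq\omega_{k+2}$; both this event and its complement occur for infinitely many $k$ almost surely, whence $\Phi(\sigma^k\omega)$ does not converge (and certainly not to $1/\nu(A)=3$). You were right to flag this step as the main obstacle; note that the paper itself offers only a one-line assertion (``If $\mathbb{P}$ is mixing, then \ldots'') with no supporting argument, and the example above shows that no argument can supply the conclusion as stated. What can be salvaged---and what the subsequent proof of Theorem~\ref{Thm:B} actually uses---is weaker: the existence, for a given continuity point $\omega_0$ of $\Phi$, of some $\omega$ and a subsequence $n_k\to\infty$ with $\sigma^{n_k}\omega\to\omega_0$ and $\Phi(\sigma^{n_k}\omega)\to\frac{1}{\nu(A)}$, which is obtained by combining the Birkhoff convergence with the lower semicontinuity proved in Lemma~\ref{le:semi.c.inf}.
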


\begin{proof}
Firstly, the Proposition~\ref{prop:recskew} ensures that, for $\mathbb{P}$-almost every $\omega \in \Sigma_p^+$, the set $A_\omega$ of points $x \in A$ whose orbit $\left(f^k_\omega(x)\right)_{k \,\in\, \mathbb{N}}$ returns to $A$ infinitely often has full $\nu$-measure in $A$. Therefore, we may consider the map $\varphi : \Sigma_p^+ \to\mathbb R$ defined by
$$\omega \in \Sigma_p^+ \mapsto \varphi(\omega)=\int_A \,n_A^\omega(x) \, d\nu(x)$$
where $n_A^\omega(\cdot)$ denotes the first hitting time to the set $A$ by the sequence $(f_\omega^n)_{n\,\ge\, 1}$ (cf. definition in \eqref{de.1st ret}). The map $\varphi$ is measurable and, as we are assuming that $\mathbb{P} \times \nu$ is ergodic with respect to $\mathcal{F}_G$, then, by Kac's Lemma, $\varphi$ belongs to $L^1(\mathbb{P})$ and
\begin{equation}\label{eq:int}
\int \varphi \, d\mathbb{P} = \int_{\Sigma^+_p} \int_A \,n_A^\omega(x) \, d\nu(x) \,d\mathbb{P}(\omega) =  1.
\end{equation}
Besides, as $\mathbb{P}$ is ergodic (a consequence of the ergodicity of $\mathbb{P} \times \nu$), the application of Birkhoff's Ergodic Theorem to $\varphi$ and $\mathbb{P}$ yields that, for $\mathbb{P}$-almost every $\omega$,
\begin{equation*}\label{eq:int2}
\lim_{k \,\to\, +\infty}\,\frac1k \,\sum_{j=0}^{k-1} \,\int_A \,n_A^{\sigma^j(\omega)}(x) \, d\nu(x) = \int \int_A \,n_A^\omega(x) \, d\nu(x) d\mathbb{P}(\omega) =  1.
\end{equation*}
If $\mathbb{P}$ is mixing, then, for $\mathbb{P}$-almost every $\omega$,
\begin{equation*}\label{eq:mixing}
\lim_{k \,\to\, +\infty}\,\int_A \,n_A^{\sigma^k(\omega)} d\nu(x)= 1.
\end{equation*}
\end{proof}

\begin{lemma}\label{le:semi.c.inf}
The map $\varphi:\Sigma_p^+\to \mathbb R$ is lower semi-continuous.
\end{lemma}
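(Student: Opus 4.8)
The plan is to show that $\varphi(\omega)=\int_A n_A^\omega(x)\,d\nu(x)$ is lower semi-continuous by exploiting the fact that the first return time $n_A^\omega$ depends only on finitely many coordinates of $\omega$ on the set where it is small. First I would fix $\omega\in\Sigma_p^+$ and let $\omega^{(j)}\to\omega$ in $\Sigma_p^+$; I want to prove $\liminf_j \varphi(\omega^{(j)})\ge \varphi(\omega)$. The key observation is that the event $\{n_A^\omega(x)\le N\}$, i.e. the set of $x\in A$ with $f_\omega^k(x)\in A$ for some $1\le k\le N$, depends only on $\omega_1,\ldots,\omega_N$: explicitly, for $x\in A$,
\begin{equation*}
n_A^\omega(x)\wedge N \;=\; n_A^{\omega'}(x)\wedge N \qquad \text{whenever } \omega_i=\omega'_i \text{ for } 1\le i\le N.
\end{equation*}
Hence if $\omega^{(j)}\to\omega$, then for each fixed $N$ we have $\omega^{(j)}_i=\omega_i$ for $1\le i\le N$ once $j$ is large enough, and therefore $n_A^{\omega^{(j)}}(x)\wedge N = n_A^\omega(x)\wedge N$ for all $x\in A$ and all large $j$.

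Next I would integrate: for any $N\in\mathbb N$ and all sufficiently large $j$,
\begin{equation*}
\varphi(\omega^{(j)}) \;=\; \int_A n_A^{\omega^{(j)}}(x)\,d\nu(x) \;\ge\; \int_A \bigl(n_A^{\omega^{(j)}}(x)\wedge N\bigr)\,d\nu(x) \;=\; \int_A \bigl(n_A^{\omega}(x)\wedge N\bigr)\,d\nu(x).
\end{equation*}
Taking $\liminf_{j\to\infty}$ gives $\liminf_j \varphi(\omega^{(j)}) \ge \int_A (n_A^\omega(x)\wedge N)\,d\nu(x)$ for every $N$. Then I would let $N\to\infty$ and invoke the monotone convergence theorem, since $n_A^\omega(x)\wedge N \nearrow n_A^\omega(x)$ pointwise on $A$; this yields $\liminf_j \varphi(\omega^{(j)})\ge \int_A n_A^\omega(x)\,d\nu(x)=\varphi(\omega)$, which is exactly lower semi-continuity. (Implicitly one uses $\Sigma_p^+$ with the product topology, so that convergence $\omega^{(j)}\to\omega$ really does mean agreement on arbitrarily long initial segments; and the values $+\infty$ cause no trouble since $(+\infty)\wedge N = N$.)

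I expect the only mildly delicate point to be the identity $n_A^\omega(x)\wedge N = n_A^{\omega'}(x)\wedge N$ when $\omega,\omega'$ agree on the first $N$ coordinates: this is immediate from the definition $f_\omega^k = g_{\omega_k}\cdots g_{\omega_1}$, since for $1\le k\le N$ the composition $f_\omega^k$ depends only on $\omega_1,\ldots,\omega_k$, hence only on $\omega_1,\ldots,\omega_N$. Everything else — passing the inequality through the integral, and the final monotone convergence step — is routine. It is worth noting that $\varphi$ need not be continuous: $n_A^\omega$ can jump to $+\infty$ (or just to a large finite value) under small perturbations of $\omega$, so one genuinely only obtains lower semi-continuity, and this is precisely what is needed to extract the Baire residual set $\mathcal R$ in Theorem~\ref{Thm:B}.
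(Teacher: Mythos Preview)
Your proof is correct and rests on the same key observation as the paper's: for $x\in A$, the value $n_A^\omega(x)$, when it is at most $N$, depends only on $\omega_1,\dots,\omega_N$. The paper phrases this via the level sets $A_k=\{x\in A: n_A^\omega(x)=k\}$ and a tail estimate $\sum_{k>N}k\,\nu(A_k)<\varepsilon$, whereas you use the truncation $n_A^\omega\wedge N$ and monotone convergence; these are two packagings of the same idea, and your version has the small advantage of handling the case $\varphi(\omega)=+\infty$ without a separate argument.
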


\begin{proof}
First of all, we notice that $n_A^\omega(x) < \infty$ for $\mathbb{P}$-almost every $\omega \in \Sigma^+_p$ and $\nu$-almost every $x\in X$ (cf. Theorem~\ref{Thm:A}). 
For such an $\omega = \omega_1\,\omega_2\,\cdots \in \Sigma_p^+$ and $k \in \mathbb{N}$, let $A_k$ be the set $\{x\in A \colon n_A^\omega(x)=k\}$ and $[\omega_1 \dots \omega_k]$ denote the set of sequences $\theta \in \Sigma_p^+$ such that $\theta_i=\omega_i$ for all $1\leq i\leq k$. Observe also that, if $\theta \in [\omega_1\dots\omega_k]$, then $n_A^\omega(x)=n_A^\theta(x)$ for any $x \in A_k$. Besides, as
$$\int_A \,n_A^\omega(x) \, d\nu(x) = \sum_{k=1}^\infty \,k\,\nu(A_k) <\infty$$
for any $\varepsilon>0$ there exists $N(\varepsilon)\in\mathbb N$ such that $\sum_{k=N(\varepsilon)+1}^{\infty} \,k\,\nu(A_k)<\varepsilon$. Therefore,
$$\varphi(\omega) = \sum_{k=1}^{N(\varepsilon)} \,k\,\nu(A_k) + \sum_{k=N(\varepsilon)+1}^{\infty} \,k\,\nu(A_k) < \sum_{k=1}^{N(\varepsilon)} \,k\,\nu(A_k) + \varepsilon$$
and so
$$ \varphi(\omega)-\varepsilon <\sum_{k=1}^{N(\varepsilon)} \,k\,\nu(A_k)< \sum_{k=1}^{N(\varepsilon)} \,k\,\nu(A_k) + \int_{A \setminus \cup_{k=1}^{N(\varepsilon)}\,A_k}\,n_A^\theta(x) \, d\nu(x)=\int_A \,n_A^\theta(x) \, d\nu(x)=\varphi(\theta).$$
\end{proof}

As $\varphi$ is lower semi-continuous, $\varphi$ has a residual set $\mathcal C$ of points of continuity and there exists $\bar{\omega}$ in the support of the measure $\mathbb P$ where $\varphi$ attains its minimum, that is,
$$\int_A \,n_A^{\bar{\omega}}(x) \, d\nu(x) =\min_{\omega \,\in\, \Sigma^+_p }\,\int_A \,n_A^{\omega}(x)\, d\nu(x).$$
Moreover, if $\mathbb P=\mathbb P_{\underline{a}}$, then it is positive on nonempty open sets and so we may take $\omega_0 \in \mathcal C \cap \supp \mathbb P$. As $\mathbb P_{\underline{a}}$ is
mixing, there exists $\omega \in \Sigma^+_p$ and a sequence $n_k \to \infty$ such that
$\lim_{k\to\infty} d\,(\sigma^{n_k}(\omega), \omega_0) = 0$ and  $\lim_{k \,\to\, +\infty}\,\int_A \,n_A^{\sigma^{n_k}(\omega)} d\nu(x) = 1$. Consequently, as $\omega_0$ is a continuity point of $\varphi$,
$$\int_A \,n_A^{\omega_0} (x) \, d\nu(x) = \varphi(\omega_0) = \lim_{k \,\to\, +\infty}\, \varphi (\sigma^{n_k}(\omega)) = \lim_{k \,\to\, +\infty}\,\int_A \,n_A^{\sigma^{n_k}(\omega)} d\nu(x) = 1.$$

\section{Proof of Theorem~\ref{Thm:C}}\label{se:vp}

Firstly we will introduce the definition of measure-theoretic entropy for a free semigroup action. Afterwards, we will deduce a partial variational principle.

\subsection{Measure-theoretic entropy of a free semigroup action}\label{sse.metric entropy}

Let $\mathbb{P}$ be a $\sigma$-invariant probability measure and $\nu$ a probability measure invariant by any generator in $G_1^*$. Given a measurable finite partition $\beta$ of $X$, $n \in \mathbb{N}$ and $\omega=\omega_1 \omega_2 \cdots \in \Sigma^+_p$, define
\begin{eqnarray}\label{eq.beta}
\beta^n_1(\omega) &=& g_{\omega_1}^{-1} \beta\, \bigvee\, g_{\omega_1}^{-1}g_{\omega_2}^{-1} \beta\, \bigvee\,\cdots\,\bigvee\,  g_{\omega_1}^{-1} g_{\omega_2}^{-1}\cdots g_{\omega_{n-1}}^{-1}\beta \\
\beta^n_0(\omega) &=& \beta\, \bigvee\, \beta^n_1(\omega) \quad \text{and}\quad
\beta^\infty_1(\omega) = \bigvee_{n=1}^\infty\, \beta_1^n(\omega) \nonumber.
\end{eqnarray}
Then the conditional entropy of $\beta$ relative to $\beta^\infty_1(\omega)$, denoted by $H_\nu(\beta|\beta^\infty_1(\omega))$, is a measurable function of $\omega$ and $\mathbb{P}$-integrable (cf. \cite{Petersen}). Let
$h_\nu(\mathbb{S}, \mathbb{P}, \beta)=\int_{\Sigma^+_p}\,H_\nu(\beta|\beta^\infty_1(\omega))\,d\,\mathbb{P}(\omega).$
Proposition 1.1 of \cite[\S6]{Petersen} shows that
\begin{equation}\label{eq:conditional entropy}
h_\nu(\mathbb{S}, \mathbb{P}, \beta)= \lim_{n \,\to\, + \infty}\, \frac{1}{n}\, \int_{\Sigma^+_p}\,H_\nu(\beta^n_0(\omega))\,d\,\mathbb{P}(\omega).
\end{equation}
where $H_\nu(\beta^n_0(\omega))$ is the entropy of the partition $\beta^n_0(\omega)$.

\begin{definition}\label{de.metric entropy} The \emph{metric entropy of the semigroup action with respect to $\mathbb{P}$ and $\nu$} is given by
$$h_\nu(\mathbb{S},\mathbb{P})=\sup_\beta \, h_\nu(\mathbb{S}, \mathbb{P}, \beta).$$
\end{definition}

For instance, if $\mathbb{P}$ is a Dirac measure $\delta_{\underline{j}}$ supported on a fixed point $\underline{j}=jj\cdots$, where $j \in \{1,\cdots,p\}$, then
$h_\nu(\mathbb{S},\delta_{\underline{j}})=h_\nu(g_j).$
If, instead, $\mathbb{P}$ is the symmetric random walk, that is, the Bernoulli $(\frac{1}{p}, \cdots,\frac{1}{p})$-product probability measure $\mathbb{P}_{\underline{p}}$, then (compare with \cite[Definition 4.1]{LMW2016})
$$h_\nu(\mathbb{S},\mathbb{P}_{\underline{p}}) = \sup_\beta\,\lim_{n\, \to\, + \infty}\, \frac{1}{n}\,\left(\frac{1}{p^n}\,\sum_{|\omega| = n} \,H_\nu(\beta^n_0(\omega))\right).$$

Let us resume the proof of Theorem~\ref{Thm:C}. For every $\nu$ and $\mathbb{P}$ as prescribed before, Abramov and Rokhlin proved that
\begin{equation}\label{Abramov-Rokhlin formula}
h_{\mathbb{P} \times \nu}(\cF_G)= h_\mathbb{P}(\sigma) + h_\nu(\mathbb{S},\mathbb{P}).
\end{equation}
If we now summon Bufetov's formula $h_{\text{top}}(\cF_G) = \log p + h_{\text{top}}(\mathbb{S})$ from \cite{Bufetov}
then we conclude that, for every $\sigma$-invariant probability measure $\mathbb{P}$, we have
\begin{eqnarray*}
\sup_{\nu \,\in\, \mathcal{M}_G} \, h_\nu(\mathbb{S},\mathbb{P})
&\leq& \sup_{\nu \,\in\, \mathcal{M}_G} \, \{\,h_{\mathbb{P} \times \nu}(\cF_G) - h_\mathbb{P}(\sigma)\,\}
	= \sup_{\nu \,\in\, \mathcal{M}_G} \, \{\,h_{\mathbb{P} \times \nu}(\cF_G)\,\} - h_\mathbb{P}(\sigma) \\
&\leq& h_{\text{top}}(\cF_G) - h_\mathbb{P}(\sigma)
	= h_{\text{top}} (\mathbb{S}) + \log p - h_\mathbb{P}(\sigma).
\end{eqnarray*}
When $\mathbb{P}=\mathbb{P}_{\underline{p}}$, as $h_{\mathbb{P}_{\underline{p}}}(\sigma)=\log p$, we obtain
$$\sup_{\nu \in \mathcal{M}_G} \, h_\nu(\mathbb{S},\mathbb{P}_{\underline{p}}) \leq h_{\text{top}} (\mathbb{S}).$$
%
%

If each generator $g_i$, for $i = 1,\cdots,p$ is $C^2$ expanding and $\mathbb{P}$ is a Bernoulli probability measure $\mathbb{P}_{\underline{a}}$ for some probability vector $\underline{a}=(a_1,\cdots, a_p)$, then
\begin{eqnarray*}
\sup_{\nu\, \in\, \mathcal{M}_G} \, h_\nu(\mathbb{S},\mathbb{P}_{\underline{a}}) &=& \sup_{\nu\, \in\, \mathcal{M}_G} \, \{\,h_{\mathbb{P}_{\underline{a}} \times \nu}(\cF_G)\,\} - h_{\mathbb{P}_{\underline{a}}}(\sigma) \leq \sup_{\mu:\,\, (\cF_G)_*\mu=\mu,\,\, \pi_*\mu=\mathbb{P}_{\underline{a}}} \, \{\,h_\mu(\cF_G)\,\} - h_{\mathbb{P}_{\underline{a}}}(\sigma) \\
&=& P_{\text{top}}^{(q)}(\cF_G,0,\mathbb{P}_{\underline{a}}) \leq P_{\text{top}}^{(a)}(\cF_G,0,\mathbb{P}_{\underline{a}}) = h_{\text{top}}(\mathbb{S},\mathbb{P}_{\underline{a}}).
\end{eqnarray*}

\begin{remark}
Observe that, when $\underline{a}=\underline{p}$, we have (cf. \cite{CRV})
$$h_{\text{top}}(\mathbb{S},\mathbb{P}_{\underline{p}})= h_{\text{top}}(\mathbb{S}) \quad \quad \text{and} \quad \quad P_{\text{top}}^{(q)}(\cF_G,0,\mathbb{P}_{\underline{p}}) < P_{\text{top}}^{(a)}(\cF_G,0,\mathbb{P}_{\underline{p}}).$$
So, in this case, we get
$\sup_{\nu\, \in\, \mathcal{M}_G} \, h_\nu(\mathbb{S},\mathbb{P}_{\underline{p}}) < h_{\text{top}}(\mathbb{S}).$
\end{remark}

\begin{example}\label{example1}
Let $g_1: \mathcal{S}^1 \to \mathcal{S}^1$ and $g_2: \mathcal{S}^1 \to \mathcal{S}^1$ be the unit circle expanding maps given by $g_1(z)=z^2$ and $g_2(z)=z^3$ and consider the free semigroup $G$ generated by $G_1=\{id, g_1,g_2\}$. Their topological entropies are $\log 2$ and $\log 3$, respectively. Let $\mathbb S$ be the corresponding semigroup action. According to \cite[Section \S8]{CRV}, we have $h_{\text{top}}(\cF_G)=\log 5 \sim 1.609$, $h_{\text{top}}(\mathbb S)=\log (\frac{5}{2}) \sim 0.916$ and $P_{\text{top}}^{(q)}(\cF_G,0,\mathbb{P}_{\underline{2}}) = \frac{\log 3 + \log 2}{2} \sim 0.896$.
\end{example}

\begin{remark}\label{sse.fibered metric entropy}
Each time we fix $\omega=\omega_1\omega_2\cdots \in \Sigma^+_p$, we restrict the semigroup action to a sequential dynamical system, we will denote by $\omega$-SDS, whose orbits are the sequences $(f_\omega^n(x))_{n \,\in\, {\mathbb N}_0; \, x \, \in \, X}$.
Given $\omega \in \Sigma^+_p$ and $\nu \in \mathcal{M}_G$, we may define the measure-theoretic entropy of the $\omega$-SDS by
$h_\nu(\text{$\omega$-SDS})=\sup_\beta \, h_\nu(\text{$\omega$-SDS}, \beta)$, where $\beta$ is any measurable finite partition of $X$,
\begin{equation}\label{eq.beta omega}
h_\nu(\text{$\omega$-SDS}, \beta) = \lim_{n\, \to\, + \infty}\, \frac{1}{n}\,H_\nu(\beta^n_0(\omega))
\end{equation}
and $\beta^n_0(\omega)$, $\beta^n_1(\omega)$ are as in \eqref{eq.beta}. Then, using the Dominated Convergence Theorem, it is not hard to prove that,
for every probability measure $\nu \in \mathcal{M}_G$, we have
$$h_\nu(\mathbb{S}, \mathbb{P}) \leq \int_{\Sigma^+_p}\, h_\nu(\text{$\omega$-SDS})\,d\,\mathbb{P}(\omega).$$
\end{remark}

\section{Proof of Theorem~\ref{Thm:D}}\label{sec:Recdelta}

We will start this section recalling the notion of orbital specification property introduced in \cite{RoVa1} and a few facts about recurrence by the skew product associated to a free semigroup action. The reader acquainted with this preliminary information may omit the next two subsections.

\subsection{Orbital specification}

We say that the continuous semigroup action $\mathbb{S}: G\times X \to X$ associated to the finitely generated semigroup $G$ satisfies the \emph{weak orbital specification property} if, for any $\vep>0$, there exists $q(\vep)>0$ such that, for any $q \ge q(\vep)$, we may find a set $\tilde G_{q}\subset G_{q}^*$ satisfying
$\lim_{q\to\infty}\,\sharp \,\tilde G_p /\sharp \,G_p^*=1$
and for which the following shadowing property holds: for any $h_{q_j}\in \tilde G_{q_j}$ with $q_j\ge q(\vep)$, any points $x_1, \dots, x_k \in X$, any natural numbers $n_1, \dots, n_k$ and any concatenations
$\underline g_{n_j, j}= g_{i_{n_j}, j} \dots g_{i_2,j} \, g_{i_1,j} \in G_{n_j}$ with $1\leq j \leq k$, there exists $x\in X$ satisfying
$\text{dist}(\underline g_{{\ell}, 1} (x) \; , \;\underline g_{{\ell}, 1} (x_1) ) < \vep, \forall \ell = 1, \dots, n_1$
and
$\text{dist }(\underline g_{{\ell}, j} \, \underline  h_{q_{j-1}} \, \dots \, \underline g_{{n_2}, 2} \, \underline h_{q_1} \, \underline g_{{n_1}, 1} (x) \; , \; \underline g_{{\ell}, j} (x_j)) <  \vep$ for all $j=2, \dots, k$ and $\ell = 1, \dots, n_j.$
If $\tilde G_{ p}$ can be taken equal to $G^*_{ p}$, we say that $\mathbb{S}$ satisfies the \emph{strong orbital specification property}. If the point $x$ can be chosen in $\text{Per}(G)$, then we refer to this property as the \emph{periodic orbital specification property}. For instance, it is true for finitely generated semigroups of topologically mixing Ruelle expanding maps (cf. \cite[Theorem~16]{RoVa1}).

\subsection{First return times}

Although the recurrence for a semigroup action $\mathbb S$ and for the random dynamical system modeled by the skew product $\cF_G$ are not the same, they are nevertheless bonded. Given a measurable subset $A$ of $X$ and any $x \in A$, we may define \emph{the first return of $x$ to $A$ by the semigroup action} as follows
\begin{equation}
n^\mathbb{S}_A(x)= \left\{\begin{array}{ll}
\inf\,\left\{n_A^\omega(x) \colon \omega \in \Sigma_p^+ \right\} & \mbox{if this set is nonempty}\\
+\infty & \mbox{otherwise.}
\end{array}
\right.
\end{equation}
Then
$n^\mathbb{S}_A(x) = \inf \,\{k\ge 1 \colon \cF_G^k \,(\Sigma_p^+ \times \{x\}) \cap (\Sigma_p^+ \times A) \neq\emptyset\}.$
Moreover, given $B \subset \Sigma_p^+ \times X$, we may take \emph{the shortest return time of $B$ to itself by the skew product $\cF_G$}, that is,
$$\mathcal{T}^{\cF_G}(B)=\inf\, \{k \in \mathbb N \colon \cF_G^k (B) \cap  B \neq \emptyset \}.$$
In particular, if $B=\Sigma_p^+\times A$, we obtain
$$\inf_{x\,\in\, A} n^\mathbb{S}_A(x) = \mathcal{T}^{\cF_G}(\Sigma_p^+\times A) = \inf_{\omega \,\in\, \Sigma_p^+} \mathcal{T}^\omega(A)$$
and (see Definition~\ref{de:action-return}) 
\begin{equation}\label{eq:minimalFG}
\mathcal{T}^{\mathbb{S}}(A)
	= \inf \,\{k\ge 1 \colon \cF_G^k (\Sigma_p^+ \times A) \cap (\Sigma_p^+ \times A) \neq\emptyset \}
	= \mathcal{T}^{\cF_G}(\Sigma_p^+ \times A).
\end{equation}
The pointwise return time functions for the semigroup action $\mathbb S$ and the skew product $\cF_G$ are also related: by \eqref{eq:recskew}, given a measurable set $A \subset X$, for every $x \in X$ and $\omega \in \Sigma_p^+$ we have
$$n_A^\omega(x) = n^{\cF_G}_{\Sigma_p^+ \times A}(\omega,x)= \text{ first return time of } (\omega,x) \text{ to the set } {\Sigma_p^+ \times A} \text{ by } \cF_G.$$

\subsection{Shortest returns of balls and Lyapunov exponents}

In the special case of semigroups of topologically mixing expanding maps, it is known that the skew product map $\cF_G$ satisfies the periodic specification property (see e.g. \cite[Theorem~28]{RoVa1}). Moreover, if $\mathbb{P} \times \nu$ has positive entropy with respect to $\cF_G$ then, using ~\eqref{eq:minimalFG}, for $\mathbb{P} \times \nu$-almost every $(\omega,x)$, one has (cf. \cite{ACS03,Var09})
\begin{equation*}
\lim_{\vep\,\to\,0} \limsup_{n\,\to\, \infty} \frac{\mathcal{T}^{\mathcal F_G}(B_\vep((\omega,x),n))}{n}
	=\lim_{\vep\,\to\,0} \liminf_{n\,\to\, \infty} \frac{\mathcal{T}^{\mathcal F_G}(B_\vep((\omega,x),n))}{n}
	=1
\end{equation*}
where $B^\omega_{\delta}(x,n)= \{y\in X \colon d(f_{ \omega}^j(x), f_{ \omega}^j(y))<\delta, \,\,\,\forall \,0\le j \le n-1\}$ stands for the dynamical ball with center $x$, radius $\delta$ and length $n$ for the dynamics $(f_\omega^n)_{n\ge 1}$. 
The next result generalizes this statement, employing a notion of metric entropy of the semigroup action whose definition will be given in Subsection~\ref{sse.metric entropy}.

\begin{proposition}\label{prop:tau} Let $G$ be the semigroup generated by $G_1=\{id, g_1, \dots, g_p\}$, where the elements in $G_1^*$ are $C^1$ expanding maps on a compact connected Riemannian manifold $X$, satisfy the orbital specification property and preserve a Borel probability measure $\nu$ on $X$. Consider a $\sigma$-invariant Borel probability measure $\mathbb P$ on $\Sigma_p^+$ such that 
$h_\nu(\mathbb S, \mathbb P) > 0$. Assume also that  
$\mathbb P \times \nu$ is ergodic with respect to $\cF_G$. Then, for $\mathbb{P}$-almost every $\omega$ and $\nu$-almost every $x\in X$, we have
$$\lim_{\delta\,\to\, 0} \limsup_{n\,\to\,+\infty} \,\frac{ \mathcal{T}^\omega (B^\omega_\delta(x,n)) }{n}
= \lim_{\delta\,\to\, 0} \liminf_{n\,\to\,+\infty} \,\frac{ \mathcal{T}^\omega (B^\omega_\delta(x,n)) }{n} = 1.$$
\end{proposition}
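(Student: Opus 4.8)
The plan is to prove separately the two one-sided estimates
$$\limsup_{n\to+\infty}\frac{\mathcal{T}^\omega(B^\omega_\delta(x,n))}{n}\le 1
\qquad\text{and}\qquad
\liminf_{n\to+\infty}\frac{\mathcal{T}^\omega(B^\omega_\delta(x,n))}{n}\ge 1-\eta(\delta),$$
where $\eta(\delta)\to 0$ as $\delta\to 0$, the first one holding for every $\omega$ and every $x$ (once $\delta$ is below a threshold depending only on $G_1$) and the second one for $\mathbb{P}$-almost every $\omega$ and $\nu$-almost every $x$. Since $\delta\mapsto\mathcal{T}^\omega(B^\omega_\delta(x,n))$ is non-increasing, the two estimates together squeeze $\lim_{\delta\to0}\liminf_n$ and $\lim_{\delta\to 0}\limsup_n$ to the common value $1$.

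For the upper estimate I would exploit the expanding structure directly on $X$. Because the generators are $C^1$ expanding maps on a compact connected manifold, there is $\delta_1>0$, depending only on $G_1$, such that for $0<\delta<\delta_1$ and every $\omega,x,n$ the iterate $f_\omega^{\,n}$ maps $B^\omega_\delta(x,n)$ onto a set containing a genuine ball $B(f_\omega^{\,n}(x),c\delta)$ with a uniform $c>0$ (the usual inverse-branch/bounded-distortion description of dynamic balls of expanding maps). Next I would use that a finite family of $C^1$ expanding maps on a compact connected Riemannian manifold is uniformly topologically exact along arbitrary compositions — which is precisely where the orbital specification property of $\mathbb{S}$ gets used — so there is $N(\delta)\in\mathbb{N}$ with $g_{i_N}\circ\cdots\circ g_{i_1}(B(q,c\delta))=X$ for all $q\in X$, all strings $i_1,\dots,i_N$ and all $N\ge N(\delta)$. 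Composing with $f_\omega^{\,n}$ gives $f_\omega^{\,n+N(\delta)}(B^\omega_\delta(x,n))=X$, hence $\mathcal{T}^\omega(B^\omega_\delta(x,n))\le n+N(\delta)$, and dividing by $n$ yields the upper estimate.

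The lower estimate is the substantial part, and I would route it through the skew product. The first point is the comparison $\mathcal{T}^\omega(B^\omega_\delta(x,n))\ge\mathcal{T}^{\cF_G}(B_\delta((\omega,x),n))$, valid for every $\omega$ and $x$: given an $\omega$-return of the fibre ball at time $k$ witnessed by some $y$, one produces a sequence $\theta\in\Sigma_p^+$ that agrees with $\omega$ on a long enough initial block (covering $[1,k]$, the block $[k+1,k+n]$, and the extra coordinates that control $d$-closeness on $\Sigma_p^+$) and is arbitrary afterwards; writing $f_\theta^{\,k+j}=f^{\,j}_{\sigma^k\theta}\circ f_\theta^{\,k}$ and using $\theta=\omega$ on the relevant coordinates, the pair $(\theta,y)$ realizes a $\cF_G$-return of $B_\delta((\omega,x),n)$ at the same time $k$. (The reverse inequality fails in general, because a $\cF_G$-return only constrains finitely many symbols and leaves the intermediate ones free; this is exactly why the upper estimate had to be done by hand on $X$.) Thus it suffices to show $\liminf_n\mathcal{T}^{\cF_G}(B_\delta((\omega,x),n))/n\ge 1-\eta(\delta)$ for $(\mathbb{P}\times\nu)$-almost every $(\omega,x)$, which by Fubini passes to $\mathbb{P}$-a.e. $\omega$ and $\nu$-a.e. $x$. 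By the Abramov--Rokhlin formula \eqref{Abramov-Rokhlin formula} one has $h_{\mathbb{P}\times\nu}(\cF_G)=h_\mathbb{P}(\sigma)+h_\nu(\mathbb{S},\mathbb{P})\ge h_\nu(\mathbb{S},\mathbb{P})>0$, and $\mathbb{P}\times\nu$ is ergodic by hypothesis, so $\cF_G$ is in the setting of the positive-entropy ``no too-fast return'' arguments of \cite{ACS03,Var09}: fixing $\gamma<1$, a return of $B_\delta((\omega,x),n)$ before time $\gamma n$ forces the $\cF_G$-orbit of $(\omega,x)$ to $2\delta$-shadow its own shift by some $k\le\gamma n$ for about $(1-\gamma)n$ steps, so the collection of such $(\omega,x)$ is covered by dynamic balls whose total $(\mathbb{P}\times\nu)$-mass decays exponentially in $n$ — here the Brin--Katok theorem gives $(\mathbb{P}\times\nu)(B_\delta(\cdot,m))\approx e^{-mh}$ with $h>0$ — and Borel--Cantelli discards a null set.

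The step I expect to be the main obstacle is the last one: transferring the dynamic-ball covering estimate of \cite{ACS03,Var09} to $\cF_G$, whose phase space $\Sigma_p^+\times X$ is a Cantor set times a manifold rather than a smooth space, while keeping the error $\eta(\delta)$ uniform in $\omega$ and tending to $0$ with $\delta$; this asks one to marry the symbolic Brin--Katok estimate in the base with the smooth one in the fibre. A lesser technical point is the uniform topological exactness of random compositions of expanding maps invoked for the upper estimate: although classical, it needs a short argument (the inscribed radius of the iterated image of a small ball grows geometrically until it saturates, after which connectedness of $X$ forces the image to fill $X$ within a bounded number of further steps, uniformly over the chosen generators).
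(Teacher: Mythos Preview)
Your upper estimate is fine and matches the paper's: specification (or uniform topological exactness) gives a periodic/covering point in $B^\omega_\delta(x,n)$ after $n+N_\delta$ steps, hence $\mathcal{T}^\omega(B^\omega_\delta(x,n))\le n+N_\delta$.

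The lower estimate, however, has a genuine gap: the comparison inequality $\mathcal{T}^\omega(B^\omega_\delta(x,n))\ge \mathcal{T}^{\cF_G}(B_\delta((\omega,x),n))$ is false in general, and your construction does not produce a $\cF_G$-return. For $\cF_G^k(\theta,y)\in B_\delta((\omega,x),n)$ you need $\sigma^k\theta\in B_\delta(\omega,n)$, i.e.\ $\theta_{k+i}=\omega_i$ for $1\le i\le n$ (up to $c(\delta)$ extra symbols). Your prescription ``$\theta$ agrees with $\omega$ on $[k+1,k+n]$'' gives $\theta_{k+i}=\omega_{k+i}$ instead, which only forces $\sigma^k\theta$ close to $\sigma^k\omega$, not to $\omega$. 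Unless $\omega$ happens to be (nearly) $k$-periodic, these constraints are incompatible with $\theta\in B_\delta(\omega,n)$ when $k<n$. In fact the inequality goes the other way: since $B_\delta((\omega,x),n)\subset B_\delta(\omega,n)\times B^\omega_\delta(x,n)$ for small $\delta$, any $\cF_G$-return at time $k\le n$ projects to an $\omega$-return at the same time, so $\mathcal{T}^\omega\le\mathcal{T}^{\cF_G}$ in the relevant regime. Thus the skew-product lower bound $\liminf_n \mathcal{T}^{\cF_G}/n\ge 1$ yields nothing for $\mathcal{T}^\omega$. The paper itself flags this distinction in the remark following Theorem~\ref{Thm:D}.

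This is why the paper does \emph{not} route the lower bound through $\cF_G$. Instead it works directly on the fibre: it fixes a finite partition $\beta$ of $X$ with $\nu(\partial\beta)=0$ and $h_\nu(\mathbb S,\mathbb P,\beta)>0$, uses the random Shannon--McMillan--Breiman estimate \eqref{eq:rand1} (via \cite{Zhu}) to control $\nu(\beta_0^n(\omega)(x))$ uniformly on a set $E^\omega_\xi$ of large $\nu$-measure, covers $B^\omega_\delta(x,n)$ by at most $e^{\alpha n}$ elements of $\beta_0^n(\omega)$ through the boundary-visit bound \eqref{eq:boundaryintersection}, and then runs the counting/Borel--Cantelli argument of \cite{Var09} \emph{fibrewise} to show that $\nu\big(\{x\in E^\omega_\xi:\mathcal{T}^\omega(B^\omega_\delta(x,n))\le(1-\xi)n\}\big)$ is summable in $n$. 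The key point is that all entropy and counting estimates are for the random composition $f_\omega^n$ on $X$, not for $\cF_G$ on $\Sigma_p^+\times X$; the shift component never enters the lower bound.
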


\begin{proof}
Firstly, observe that, as we are considering the product metric in $\Sigma_p^+ \times X$, then $B_\delta((\omega,x)) = B_\delta(\omega) \times B_\delta(x)$ for every $(\omega,x) \in \Sigma_p^+ \times X$ and any $\delta>0$. Moreover, dynamical balls with respect to the skew product dynamics $\mathcal{F}_G$ are in fact dynamical balls for the random composition of dynamics; that is, for every $n\ge 1$,
\begin{equation}\label{eq:inclusions1}
B_\delta((\omega,x), n) = \bigcup_{\theta \,\in\, B_\delta(\omega,n)} \,\{\theta\} \times B^{\theta}_{\delta}(x,n).
\end{equation}
Besides, if we take $\Lambda=\max_{1\,\le\, i\,\le\, p; \,x\,\in\, X} \|Dg_i(x)\|$ and $\lambda=\min_{1\,\le\, i\,\le\, p; \, x\,\in\, X} \| Dg_i(x)\|$, then clearly
\begin{equation}\label{eq:inclusions2}
B_\delta(\omega,n) \times B_{\delta \Lambda^{-n}}(x)
	\subset B_\delta((\omega,x), n)
	\subset B_\delta(\omega,n) \times B_{\delta \lambda^{-n}}(x)
\end{equation}
for every $x\in X$ and $n\ge 1$, which implies that the corresponding first return times are in decreasing order.

The periodic orbital specification property of the skew-product guarantees that, for any $\delta>0$, there exists $N_\delta\ge 1$ such that, given $n\ge 1$, we may find a periodic point $y \in B^\omega_\delta(x,n) \cap \text{Fix} \,(f_\omega^{n+N_\delta})$. In particular, $\mathcal{T}^\omega (B^\omega_\delta(x,n)) \le n + N_\delta$ and, consequently,
$$ \lim_{\delta\,\to\, 0}\limsup_{n\,\to\,+\infty} \frac{\mathcal{T}^\omega (B^\omega_\delta(x,n))}{n} \le 1.$$
To complete the proof we are left to show that, for $(\mathbb{P} \times \nu)$-almost every $(\omega,x)$,
\begin{equation}\label{eq:bound}
\lim_{\delta\,\to\, 0}\liminf_{n\,\to\,+\infty} \frac{\mathcal{T}^\omega (B^\omega_\delta(x,n))}{n} \ge 1.
\end{equation}
We will argue as in \cite[pages 2372--2373]{Var09}. 
Notice that, as $\mathbb{P} \times \nu$ is ergodic and $h_\nu(\mathbb{S}, \mathbb{P}) > 0$,
Theorem~2.1 of \cite{Zhu} informs that, for $\mathbb P \times \nu$-almost every $(\omega,x)$,
\begin{equation}\label{Zhu}
\lim_{\delta\,\to\, 0} \limsup_{n\,\to\,+\infty} - \frac1n \log \nu (B^\omega_\delta(x,n)) = \lim_{\delta\,\to\, 0} \liminf_{n\,\to\,+\infty} - \frac1n \log \nu (B^\omega_\delta(x,n)) > 0
\end{equation}
and that
$$h_\nu(\mathbb{S}, \mathbb{P}) = \int \big[ \lim_{\delta\, \to\, 0} \limsup_{n\,\to\,+\infty} - \frac1n \log \nu (B_\delta^\omega(x,n)) \big] \; d(\mathbb{P} \times \nu)(\omega,x).$$
Take now a finite measurable partition $\beta$ of $X$ satisfying $\nu(\partial \beta)=0$ and $h_\nu(\mathbb{S}, \mathbb{P}, \beta)>0$. Let $V_\delta(\partial \beta)$ stand for the neighborhood of size $\delta$ of $\Sigma_p^+ \times \partial \beta$ in $\Sigma_p^+ \times X$; notice that $(\mathbb{P} \times \nu) (V_\delta(\partial \beta))=\nu (V_\delta(\partial \beta))$. The Random Ergodic Theorem (cf. Subsection~\ref{thm:RET}) assures that, for any
small $\gamma>0$, there exists $\delta >0$ such that, at $\mathbb{P} \times \nu$-almost everywhere, one has
\begin{equation}\label{eq:boundaryintersection}
\frac1n \,\sum_{j=0}^{n-1} \delta_{V_\delta(\partial \beta)}(\cF_G^j(\omega,x)) \le  2\,(\mathbb{P} \times \nu) (V_\delta(\partial \beta)) <\gamma.
\end{equation}

Fix $\omega \in \Sigma_p^+$ in the $\mathbb P$-full measure subset of $\Sigma_p^+$ so that \eqref{Zhu} and ~\eqref{eq:boundaryintersection} hold.
 As the semigroup action $\mathbb S$ is ergodic (cf. definition in Subsection~\ref{sse:Kac}),
for any $\xi,\,\varepsilon >0$ small enough there exist $N \in \mathbb N$ and a measurable set $E^\omega_\xi \subset X$ satisfying $\nu(E^\omega_\xi)> 1-\xi$,
\begin{equation}\label{eq:rand1}
e^{- \,n \,(h_\nu(\mathbb{S}, \,\mathbb{P},\,\beta)\,+\,\xi)} \le \nu(\beta_0^n(\omega)(x))\le e^{- \,n\, (h_\nu(\mathbb{S},\, \mathbb{P},\,\beta)\,-\,\xi)}
\end{equation}
and
$\sum_{j=0}^{n-1} \delta_{\cF_G^j\,(\omega,x)} \le \gamma n $
for all $\,x \in E^\omega_\xi$ and $n\ge N$.
Besides, by equation ~\eqref{eq:rand1}, there exists $K_\omega>0$ such that
$$K_\omega^{-1} e^{- \,n \,(h_\nu(\mathbb{S},\, \mathbb{P},\,\beta)\,+\,\xi)} \le \nu(\beta_0^n(\omega)(x)) \le K_\omega e^{- \,n\, (h_\nu(\mathbb{S},\, \mathbb{P},\,\beta)\,-\,\xi)}$$
for every $n\ge 1$ and $x\in E_\xi^\omega$. As $\xi>0$ was chosen arbitrary, in order to prove \eqref{eq:bound} for $\nu$-almost every $x$, it is enough to show, using Borel-Cantelli Lemma, that
$\nu \big(\{x \in E^\omega_\xi : \mathcal{T}^\omega (B^\omega_\delta(x,n)) \leq (1-\xi) \,n \}\big)$
is summable for every small $\vep$.

We proceed covering the dynamical ball $B^\omega_\vep(x,n) \subset X$ by a collection $\tilde \beta^n_0(\omega)$ of partition elements in $\beta_0^n(\omega)$.
If $\delta>0$ is chosen small enough, then \eqref{eq:boundaryintersection} implies that the piece of orbit $(f^j_\omega(x))_{j=0}^n$ enters the $\delta$-neighborhood of $\partial\beta$ in at most $\gamma \,n$ iterates.
The argument used in  \cite[Lemma~3.2]{Var09} implies that, for any $\alpha>0$, there exist $\gamma>0$ and $\delta>0$ (given by \eqref{eq:boundaryintersection}) so that $B^\omega_\vep(x,n) \subset X$ is covered by
a collection  $\tilde \beta^n_0(\omega)$ of at most $e^{\alpha n}$ partition elements of $\beta_0^n(\omega)$, for every $x\in E_\xi^\omega$.
Therefore,
\begin{align*}
\nu \big(\{ x \in E^\omega_\xi : \mathcal{T}^\omega (B^\omega_\delta(x,n)) \leq (1-\xi) \,n \}\big)
    & = \sum_{k=0}^{(1-\xi) n} \nu \big(\{ x \in E^\omega_\xi : \mathcal{T}^\omega (B^\omega_\delta(x,n)) = k \}\big) \\
    & \le \sum_{k=0}^{(1-\xi) n}  \sum_{\substack{Q \,\in\, \tilde \beta_0^n(\omega) \\  f_\omega^k(Q) \,\in\, \tilde \beta_0^n(\omega)}}  \nu(E_\xi^\omega \cap Q).
\end{align*}
Note that $B_\delta^\omega(x,n)$ is covered by at most $e^{\alpha n}$ elements of $\tilde \beta_0^n(\omega)$ and,
among these, every $Q \in \tilde \beta_0^n(\omega)$ satisfying $f_\omega^k(Q) \in \tilde \beta_0^n(\omega)$ is determined by the first $k$ elements of the partition $\beta$ that are visited under the iterations of $f^j_\omega$, $1\le j \le k$. Thus 
\begin{align*}
\nu \big(\{ x \in E^\omega_\xi \colon & \mathcal{T}^\omega (B^\omega_\delta(x,n)) \leq (1-\xi) \,n \}\big) \\
    & \le   K_\omega e^{-\, n\, (h_\nu(\mathbb{S}, \mathbb{P},\beta)\,-\,\xi)}\,  e^{\alpha n}
 	   \sum_{k=0}^{(1-\xi) n} \# \{ Q \,\in\, \tilde \beta_0^n(\omega) :  f_\omega^k(Q) \,\in\, \tilde \beta_0^n(\omega) \}\\
& \le   K_\omega e^{-\, n\, (h_\nu(\mathbb{S}, \mathbb{P},\beta)\,-\,\xi)}\,  e^{\alpha n}
 	   \sum_{k=0}^{(1-\xi) n} K_\omega e^{k \, (h_\nu(\mathbb{S}, \mathbb{P},\beta)\,+\,\xi)} \\
    & \le K_\omega^2 (1-\xi) n \, e^{-\, n\, (h_\nu(\mathbb{S}, \mathbb{P},\beta)\,-\,\xi)} \, e^{\alpha n} \,
 	   	  e^{(1-\xi)\, n\, (h_\nu(\mathbb{S}, \mathbb{P},\beta)\,+\,\xi)}
\end{align*}
and so it is summable provided that $\alpha,\xi$ are small enough. 
\end{proof}

If we restrict to either $C^1$ expanding or conformal maps on a Riemannian manifold, we obtain the following corollary and finish the proof of Theorem~\ref{Thm:D}.

\begin{corollary}\label{cor:applicationsskew}
Let $G$ be the semigroup generated by $G_1=\{id, g_1, \dots, g_p\}$, where the elements in $G_1^*$ are $C^1$ expanding maps on a compact connected Riemannian manifold $X$ preserving a common Borel probability measure $\nu$ on $X$, and $\mathbb P$ be a $\sigma$-invariant Borel probability measure on $\Sigma_p^+$. Assume that 
$\mathbb P \times \nu$ is ergodic with respect to $\cF_G$ and that $h_{\nu}(\mathbb S,\mathbb P )>0$.
Then, for $\mathbb{P}$-almost every $\omega$ and $\nu$-almost every $x\in X$, we have
\begin{equation}\label{eq:mc1a}
\frac{1}{\log \Lambda} \le \liminf_{\delta\,\to\, 0} \,\frac{\mathcal{T}^\omega (B_\delta(x))}{-\log \vep} \le \limsup_{\delta\,\to\, 0} \,\frac{\mathcal{T}^\omega (B_\delta(x)) }{-\log \vep} \le \frac{1}{\log \lambda}
\end{equation}
where $\Lambda \ge \lambda > 1$ are, respectively,
$\Lambda = \max_{\substack{1\,\le\, i\,\le\, p}}\, \|Dg_i\|_\infty$ and $\lambda= \min_{\substack{1\,\le\, i\,\le\, p}}\, \|Dg_i\|_\infty.$
\end{corollary}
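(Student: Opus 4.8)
The plan is to deduce \eqref{eq:mc1a}, which deals with the \emph{metric} balls $B_\vep(x)$, from the \emph{dynamical}-ball estimate already established in Proposition~\ref{prop:tau}, simply by trading the length $n$ of a dynamical ball for the radius of a metric ball. The key preliminary is the elementary fact that a dynamical ball is squeezed between two metric balls whose radii are controlled by the uniform expansion/contraction rates. Indeed, each generator $g_i$ is globally $\Lambda$-Lipschitz (because $\length(g_i\circ\gamma)\le\Lambda\,\length(\gamma)$) and, below a uniform expansivity scale $r_0>0$, is injective and satisfies $d(g_i(y),g_i(z))\ge\lambda\,d(y,z)$; moreover one always has $B^\omega_r(x,n)\subset B_r(x)$, since the constraint $j=0$ in the definition of $B^\omega_r(x,n)$ is exactly $d(x,y)<r$. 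Hence, when $0<r\le r_0$ the whole orbit segment $\{(f^j_\omega x,f^j_\omega y)\}_{0\le j\le n-1}$ remains within the expansivity scale, and iterating the two bounds along it gives, for all $0<r\le r_0$, $x\in X$ and $n\ge 1$,
$$B_{r\Lambda^{-n}}(x)\ \subset\ B^\omega_r(x,n)\ \subset\ B_{r\lambda^{-(n-1)}}(x),$$
which is the single-fibre reading of \eqref{eq:inclusions1}--\eqref{eq:inclusions2}. Combined with the obvious monotonicity $A\subset B\Rightarrow\mathcal{T}^\omega(B)\le\mathcal{T}^\omega(A)$ (if $A\subset B$ then $f^k_\omega(A)\cap A\subset f^k_\omega(B)\cap B$, so every $k$ witnessing a return of $A$ also witnesses one of $B$), this turns the two inclusions into comparisons between the corresponding shortest return times.

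I would then fix a pair $(\omega,x)$ in the $(\mathbb{P}\times\nu)$-full measure set supplied by Proposition~\ref{prop:tau} and a small $r\in(0,r_0]$. For the upper estimate, given $\vep\downarrow 0$ let $n=n(\vep,r)$ be the least integer with $r\lambda^{-(n-1)}\le\vep$; then $n\sim(-\log\vep)/\log\lambda$ as $\vep\to 0$, and $B^\omega_r(x,n)\subset B_{r\lambda^{-(n-1)}}(x)\subset B_\vep(x)$, so $\mathcal{T}^\omega(B_\vep(x))\le\mathcal{T}^\omega(B^\omega_r(x,n))$ and
$$\frac{\mathcal{T}^\omega(B_\vep(x))}{-\log\vep}\ \le\ \frac{\mathcal{T}^\omega(B^\omega_r(x,n))}{n}\cdot\frac{n}{-\log\vep}.$$
Taking $\limsup_{\vep\to 0}$ and then $r\to 0$, and using that $\limsup_{n}\mathcal{T}^\omega(B^\omega_r(x,n))/n\to 1$ (Proposition~\ref{prop:tau}) while $n/(-\log\vep)\to 1/\log\lambda$, yields the rightmost inequality of \eqref{eq:mc1a}. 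For the lower estimate, take instead $n=n(\vep,r)$ to be the largest integer with $r\Lambda^{-n}\ge\vep$; then $n\sim(-\log\vep)/\log\Lambda$, $B_\vep(x)\subset B_{r\Lambda^{-n}}(x)\subset B^\omega_r(x,n)$, hence $\mathcal{T}^\omega(B_\vep(x))\ge\mathcal{T}^\omega(B^\omega_r(x,n))$, and the same bookkeeping with $\liminf_n\mathcal{T}^\omega(B^\omega_r(x,n))/n\to 1$ gives the leftmost inequality. The middle inequality of \eqref{eq:mc1a} is trivial, so the corollary follows. Finally, since $\mathcal{T}^{\mathbb{S}}(B_\vep(x))=\inf_{\omega}\mathcal{T}^\omega(B_\vep(x))\le\mathcal{T}^\omega(B_\vep(x))$ for the chosen $\omega$ (cf. \eqref{eq:minimalFG}), the upper bound in \eqref{eq:mc1a} at once delivers \eqref{eq:SemigroupUpper} of Theorem~\ref{Thm:D}.

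The one point genuinely requiring care is the squeeze of the first paragraph: it must hold uniformly in $n$ and at every scale below $r_0$, and this is exactly what the automatic inclusion $B^\omega_r(x,n)\subset B_r(x)$ together with the uniform expansivity constant of the generators provides; everything afterwards is the routine bookkeeping of performing the limit $\vep\to 0$ before the limit $r\to 0$, the real analytic content --- the linear growth of the shortest return times of dynamical balls --- being already contained in Proposition~\ref{prop:tau}. (The conformal equality \eqref{eq:mc2a} of Theorem~\ref{Thm:D} is not covered by this corollary: there $\Lambda\neq\lambda$ in general, so the above only gives an interval; obtaining the precise value would require replacing $\Lambda$ and $\lambda$ by the common conformal derivative $|\det Dg_i|^{1/\dim X}$ and invoking a sharp local-entropy identity in place of Proposition~\ref{prop:tau}.)
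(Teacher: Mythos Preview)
Your argument is essentially the same as the paper's: both use the uniform squeeze $B_{r\Lambda^{-n}}(x)\subset B^\omega_r(x,n)\subset B_{r\lambda^{-n}}(x)$ to translate the dynamical-ball result of Proposition~\ref{prop:tau} into the metric-ball statement~\eqref{eq:mc1a}, via the monotonicity of $\mathcal{T}^\omega$ under inclusion. Your bookkeeping (choosing $n=n(\vep,r)$ explicitly) is slightly more detailed than the paper's condensed chain of inequalities, but the content is identical.

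One small omission: Proposition~\ref{prop:tau} carries the extra hypothesis that the semigroup action satisfies the orbital specification property, which is \emph{not} assumed in the corollary. The paper's proof begins by invoking \cite[Theorem~16]{RoVa1} to verify that $C^1$ expanding maps on compact connected manifolds automatically satisfy this property; you should add that sentence before you can legitimately ``fix a pair $(\omega,x)$ in the $(\mathbb{P}\times\nu)$-full measure set supplied by Proposition~\ref{prop:tau}''.
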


\begin{proof}
As a consequence of \cite[Theorem~16]{RoVa1}, we know that every finitely generated free semigroup action by $C^1$ expanding maps on compact connected manifolds satisfies the orbital specification property.
Moreover, for every $x\in X$, $n \in \mathbb N$, $\omega \in \Sigma_p^+$ and $\delta>0$, we have $B_{\Lambda^{-n}\delta}(x) \subset B^\omega_{\delta}(x,n) \subset B_{\lambda^{-n}\delta}(x)$.
Thus,if $\nu$ is a probability measure invariant by all elements in $G$, $\mathbb P \times \nu$ is ergodic
for $\cF_G$ and $h_{\nu}(\mathbb S,\mathbb P )>0$, we conclude from Proposition~\ref{prop:tau} that
$$\limsup_{\vep\,\to\, 0} \frac{\mathcal{T}^\omega(B_\delta(x))}{-\log \delta} \le  \lim_{\vep\,\to\, 0} \limsup_{n\,\to\,+\infty } \frac{\mathcal{T}^\omega(B_{\lambda^{-n}\vep}(x))}{-\log (\lambda^{-n} \vep)} \le  \lim_{\vep\,\to\, 0} \limsup_{n\, \to\, +\infty}  \frac{\mathcal{T}^\omega( B^\omega_{\vep}(x,n))}{n \log \lambda} = \frac1{\log\lambda}.
$$
The lower bound estimate is obtained by a similar reasoning.
\end{proof}

It is a straightforward outcome of \eqref{eq:mc1a} that,  
in the case of semigroups of $C^1$-expanding maps, one has
$$\limsup_{\delta \,\to\, 0} \,\frac{\mathcal{T}^\mathbb{S}(B_\vep(x)) }{-\log \vep} \le \frac{1}{\log \lambda}.$$
Although we believe that a similar lower bound holds, we have not been able to obtain it.

\begin{remark} Notice that, if the generators are not expanding maps one expects larger return times.
For instance, if the semigroup action is generated by circle rotations with rotation numbers in the interval $0 <\alpha_0\le  \alpha
\le\alpha_1 < 1$ then it is not hard to check that $\mathcal{T}^\mathbb{S}(B_\vep(x)) \le (\frac1{\alpha_1} + 1)\frac1\delta$ for every $\delta>0$ and $x\in \mathcal S^1$. However, it is not clear whether this bound is optimal.

\end{remark}

If, besides being expanding, all elements in $G^*_1$ are conformal, then $Dg_i(x)= \|Dg_i(x) \| Id$ and
$\det |Dg_i(x)| = \|Dg_i(x)\|^{\dim X}$ for every $x\in X$ and any $i \in \{1,2,\cdots,p\}$. In particular, it follows from Oseledets' Theorem that all the Lyapunov exponents of the skew product $\cF_G$ along the fiber direction are equal and coincide with
\begin{align*}
\chi: & = \frac1{\dim X} \lim_{n\,\to\,+\infty} \frac1n \log |\det Df_\omega^n (x)|
	= \frac1{\dim X} \lim_{n\,\to\, +\infty} \frac1n \sum_{j=0}^{n-1} \log \Big|\det \frac{\partial \cF_G}{\partial x} (\cF^j_G(\omega,x)\Big| \\
	& = \frac1{\dim X}  \int\int \log \Big|\det \frac{\partial \cF_G}{\partial x}(\omega,x)\Big| \, d\nu(x) \, d\mathbb{P}_{\underline a}(\omega) = \frac1{\dim X} \sum_{i=1}^p \; a_i \int \log |\det Dg_i | \, d\nu
\end{align*}
(notice that, in the last but one estimate, we have used the ergodicity of $\mathbb{P} \times \nu$). Moreover, the expanding nature of the generators in $G_1^*$ and the assumption that $\mathbb{P}=\mathbb{P}_{\underline a}$, for some probability vector $\underline a$, imply that $\chi >0$.
Observe also that as $\|Dg_i(x)\|=\det |Dg_i(x)|^\frac1{{\dim X}}$ then, as a consequence of the mean value theorem,
given $\vep>0$, for $\mathbb{P}_{\underline a} \times \nu$-almost every $(\omega,x)$ there exists $N=N(\omega,x)\ge 1$ such that, for all $n\ge N$,
$$B_{e^{-(\chi+\varepsilon)\,n}\,\delta}(x)	\subset B^\omega_{\delta}(x,n)	\subset B_{e^{-(\chi-\varepsilon)\,n}\,\delta}(x).$$
Then, an argument identical to the one used in the first part of this proof yields that
\begin{align*}
\frac1{\chi+\vep}
	\le \limsup_{\vep\,\to\, 0} \frac{\mathcal{T}^\omega(B_\delta(x))}{-\log \delta}
	\le  \frac1{\chi-\vep}.
\end{align*}
To obtain \eqref{eq:mc2a} it is enough to let $\vep$ go to $0$.
This completes the proof of Theorem~\ref{Thm:D}.

\begin{remark}
If the measure $\mathbb{P} \times \nu$ has positive entropy and $\cF_G$ has the specification property, then it is
a consequence of \cite{BSTV03, Var09} that, for $\mathbb{P}\times \nu$-almost every $(\omega,x)$,
$$\lim_{\delta\,\to\, 0} \limsup_{n\,\to\,+\infty} \frac{\mathcal{T}(B_\delta((\omega,x), n)) }{n}
	= \lim_{\delta\,\to\, 0} \liminf_{n\,\to\,+\infty} \frac{\mathcal{T}(B_\delta((\omega,x), n)) }{n}
	=1.$$
This differs substantially from the fibred assertion provided by Theorem~\ref{Thm:D}.
\end{remark}

\begin{example}
Consider the generators $g_1$ and $g_2$ of Example~\ref{example1}, the Lebesgue measure $Leb$ on $\mathcal{S}^1$ (which is invariant by both dynamics) and the symmetric random walk corresponding to the Borel probability measure $\mathbb{P}_{\underline{2}}$. The maps $g_1$ and $g_2$ are conformal, $C^1$ expanding, $|\det Dg_1(\cdot)|= 2$ and $|\det Dg_2(\cdot)|=3$. Besides, $\mathbb{P}_{\underline{2}}\times Leb$ is ergodic with respect to the skew product $\cF_G$ (it is even weak Bernoulli; cf. \cite{M85}). Therefore, by Corollary~\ref{cor:applicationsskew} and
Example~\ref{example1}, for $\mathbb P_{\underline 2}$-almost every $\omega$ in $\Sigma_2^+$ and $Leb$-almost every $z \in \mathcal{S}^1$, we have
\begin{equation*}
\lim_{\delta\,\to\, 0} \,\frac{\mathcal{T}^\omega (B(z,\delta)) }{-\log \vep} = \frac{2}{\log 3 + \log 2} > \frac{1}{h_{\text{top}}(\mathbb{S})}.
\end{equation*}
\end{example}

\section{Proof of Theorem~\ref{Thm:E}}\label{se:RQR}

In this section we examine the connection between maximum random hitting frequency and the size of sets when evaluated by different measures. 
We start reviewing these concepts. 
Denote by $\mathcal{M}_{\mathcal{F}_G}$ the set of all probability measures invariant by the skew product $\mathcal{F}_G$. For every $\mu\in \mathcal{M}_{\mathcal{F}_G}$ the marginal of $\mu$ on $X$ is the probability measure
$(\pi_X)_*(\mu):= \mu\circ \pi_X^{-1}$ where $\pi_X: \Sigma_p^+ \times X \to X$ is the natural projection.

\subsection{Random mean sojurns} Let $G$ be a finitely generated free semigroup, with corresponding action $\mathbb{S}$, and $\mathbb{P}$ a $\sigma$-invariant probability measure on $\Sigma_p^+$.

\begin{definition}\label{def:hit} For a measurable subset $A \subset X$, the \emph{maximum random hitting frequency} of $A$ with respect to $\mathbb{P}$ is given by
\begin{align}\label{eq:ge}
\gamma_\mathbb{P}(A)=
			\mathbb{P}-\text{esssup}\,\sup_{x\,\in\, X} \,\,\gamma_{\omega,x}(A)
\end{align}
where
\begin{align}\label{eq:gex}
\gamma_{\omega, x}(A) = \limsup_{n\,\to\,+\infty}\,\frac{\sharp\,\{0\leq i\leq n-1:f_\omega^i(x)\in A\}}{n}.
\end{align}
The \emph{absolute maximum hitting frequency} of $A$ with respect to $\mathbb{P}$ is defined by
\begin{align*}
\gamma(A)=\sup_{(\omega,x) \,\in\, \Sigma_p^+ \times X}\, \gamma_{\omega, x}(A).
\end{align*}
\end{definition}

Given a measurable set $A \subset X$, consider the upper bound of its sizes when estimated by all probability measures in $\mathcal{M}_{\mathcal{F}_G}$ which project on $\mathbb{P}$, that is,
\begin{align*}
\alpha_\mathbb{P}(A) = \sup_{\{\mu\, \in \, \mathcal{M}_{\mathcal{F}_G} \colon \pi_*\mu=\mathbb{P}\}}\,\mu(\Sigma_p^+\times A).
\end{align*}

\begin{lemma}\label{lemma:usc} If $A$ is a closed subset of $X$, then there exists an ergodic probability measure $\mu_A\in\mathcal M_{\mathcal{F}_G}$ with $\pi_* \mu_A=\mathbb{P}$ and such that
$\alpha_\mathbb{P}(A)=\mu_A(\Sigma_p^+ \times A).$ Moreover, the set of maximizing measures is compact.
\end{lemma}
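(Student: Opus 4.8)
The plan is as follows. Write $\mathcal{M}_{\mathbb{P}}$ for the set of $\mathcal{F}_G$-invariant Borel probability measures on $\Sigma_p^+\times X$ whose projection by $\pi$ equals $\mathbb{P}$, so that $\alpha_{\mathbb{P}}(A)=\sup_{\mu\in\mathcal{M}_{\mathbb{P}}}\,\mu(\Sigma_p^+\times A)$. First I would record the basic topological facts about $\mathcal{M}_{\mathbb{P}}$. Since $\Sigma_p^+\times X$ is a compact metric space and $\mathcal{F}_G$ is continuous, $\mathcal{M}_{\mathcal{F}_G}$ is a weak-$*$ compact convex set of Borel probability measures on $\Sigma_p^+\times X$; and because $\mu\mapsto\pi_*\mu$ is weak-$*$ continuous, $\mathcal{M}_{\mathbb{P}}=\mathcal{M}_{\mathcal{F}_G}\cap\{\mu:\pi_*\mu=\mathbb{P}\}$ is closed in $\mathcal{M}_{\mathcal{F}_G}$, hence itself compact and convex. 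It is nonempty: taking any Borel probability measure $\eta$ on $X$, the averages $\frac1n\sum_{k=0}^{n-1}(\mathcal{F}_G^k)_*(\mathbb{P}\times\eta)$ all project by $\pi$ onto $\mathbb{P}$, because $\pi\circ\mathcal{F}_G=\sigma\circ\pi$ and $\mathbb{P}$ is $\sigma$-invariant, so any weak-$*$ accumulation point lies in $\mathcal{M}_{\mathbb{P}}$.

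Next, since $A$ is closed the set $\Sigma_p^+\times A$ is closed in $\Sigma_p^+\times X$, so by the Portmanteau theorem the functional $\mu\mapsto\mu(\Sigma_p^+\times A)$ is upper semi-continuous for the weak-$*$ topology. An upper semi-continuous functional on the compact set $\mathcal{M}_{\mathbb{P}}$ attains its maximum, which is by definition $\alpha_{\mathbb{P}}(A)$; denote by $\mathcal{M}_A$ the set of measures attaining it. Since $\mathcal{M}_A=\mathcal{M}_{\mathbb{P}}\cap\{\mu:\mu(\Sigma_p^+\times A)\geq\alpha_{\mathbb{P}}(A)\}$ is the intersection of a compact set with a closed superlevel set of an upper semi-continuous functional, it is compact; it is moreover convex, the functional being affine and $\mathcal{M}_{\mathbb{P}}$ convex. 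This already settles the last assertion of the lemma.

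Finally, to exhibit an ergodic $\mu_A$, I would take any $\mu\in\mathcal{M}_A$ and pass to its ergodic decomposition $\mu=\int\mu_\xi\,d\tau(\xi)$ with respect to $\mathcal{F}_G$. Once one knows that the components $\mu_\xi$ still project onto $\mathbb{P}$, they lie in $\mathcal{M}_{\mathbb{P}}$, so $\mu_\xi(\Sigma_p^+\times A)\leq\alpha_{\mathbb{P}}(A)$ for $\tau$-a.e.\ $\xi$, while $\int\mu_\xi(\Sigma_p^+\times A)\,d\tau(\xi)=\mu(\Sigma_p^+\times A)=\alpha_{\mathbb{P}}(A)$; hence $\mu_\xi(\Sigma_p^+\times A)=\alpha_{\mathbb{P}}(A)$ for $\tau$-a.e.\ $\xi$, and any such $\mu_\xi$ can be taken as $\mu_A$. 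I expect the one genuinely delicate point to be exactly this: verifying that $\pi_*\mu_\xi=\mathbb{P}$ for $\tau$-a.e.\ $\xi$. This is transparent when $\mathbb{P}$ is $\sigma$-ergodic --- the case relevant to the application, and in fact the only one in which the stated conclusion can hold, since the projection of an $\mathcal{F}_G$-ergodic measure is automatically $\sigma$-ergodic --- because then the $\sigma$-ergodic measures $\pi_*\mu_\xi$ must all coincide with $\mathbb{P}$ by uniqueness of the ergodic decomposition of $\mathbb{P}$, so that indeed $\mu_\xi\in\mathcal{M}_{\mathbb{P}}$.
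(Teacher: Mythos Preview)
Your proposal is correct and follows essentially the same approach as the paper: upper semi-continuity of $\mu\mapsto\mu(\Sigma_p^+\times A)$ on the compact set $\mathcal{M}_{\mathbb{P}}$ gives a maximizer, and ergodic decomposition yields an ergodic one. Your treatment is in fact more careful than the paper's on two counts: you give an explicit argument for why $\mathcal{M}_{\mathbb{P}}$ is nonempty, and you isolate the genuine subtlety in the ergodic-decomposition step---namely that the ergodic components $\mu_\xi$ must themselves project onto $\mathbb{P}$. The paper simply writes the decomposition as an integral over ``$\text{Erg}(\mathcal{F}_G,\mathbb{P})$'' without justifying that the components lie there. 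Your observation that this requires $\mathbb{P}$ to be $\sigma$-ergodic (since $\pi_*$ of an $\mathcal{F}_G$-ergodic measure is $\sigma$-ergodic) is exactly right, and it exposes an unstated hypothesis in the lemma as the paper phrases it.
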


\begin{proof}
Firstly, endow the space $\mathcal{M}_{\mathcal{F}_G}$ with the weak$^*$ topology. Therefore, as $A$ is closed, the functional
$$\Psi_A:\mu\in\mathcal M_{\mathcal{F}_G} \mapsto \mu(\Sigma_p^+\times A)$$
is upper semi-continuous (cf. \cite{Wa}). Moreover, $\mathcal M_{\mathcal F_G,\mathbb{P}} := \mathcal M_{\mathcal F_G} \cap \pi_*^{-1}(\mathbb{P})$ is a non-empty compact subset of $\mathcal{M}_{\mathcal{F}_G}$.
Hence, $\Psi_A$ attains a maximum in $\mathcal M_{\mathcal F_G,\mathbb{P}}$.

Let $\text{Erg}(\mathcal F_G,\mathbb{P})$ be the ergodic members of $\mathcal M_{\mathcal F_G,\mathbb{P}}$, and consider a measure $\xi_A \in \mathcal M_{\mathcal F_G,\mathbb{P}}$ where $\Psi_A$ attains its maximum, whose ergodic decomposition (cf. \cite{Wa, Ph}) in $\mathcal M_{\mathcal F_G,\mathbb{P}}$  is $\xi_A=\int_{\text{Erg}(\mathcal F_G,\mathbb{P})}\,\,m\,d\tau(m).$ As $\xi_A$ maximizes $\Psi_A$, we know that $m(\Sigma_p^+\times A) \leq \xi_A(\Sigma_p^+\times A)$ for every $m \in \mathcal M_{\mathcal F_G,\mathbb{P}}$. Therefore, as $\xi_A(\Sigma_p^+\times A)=\int_{\text{Erg}(\mathcal F_G,\mathbb{P})}\,\,m(\Sigma_p^+\times A)\,d\tau(m)$, we must have
$m(\Sigma_p^+\times A)=\xi_A(\Sigma_p^+\times A) = \alpha_\mathbb{P}(A)$
for $\tau$-almost every $m$. Thus, we may take an ergodic maximizing measure of $\Psi_A$, as claimed.

We observe that the upper semi-continuity of $\Psi_A$ also implies that the set of maximizing probability measures is compact. Indeed, if a sequence of measures $\xi_{A,n} \in \mathcal M_{\mathcal F_G,\mathbb{P}}$ satisfies $\lim_{n \,\to\, +\infty}\, \xi_{A,n}=\xi$ in the weak$^*$ topology and $\alpha_\mathbb{P}(A)=\xi_{A,n}(\Sigma_p^+ \times A)$ for every $n \in \mathbb{N}$, then $\xi \in \mathcal M_{\mathcal F_G,\mathbb{P}}$ and, as $\Psi_A$ is upper semi-continuous, we conclude that
\begin{eqnarray*}
\alpha_\mathbb{P}(A) &=& \lim_{n\, \to\, +\infty} \xi_{A,n}(\Sigma_p^+ \times A) = \lim_{n\, \to\, +\infty} \Psi_A(\xi_{A,n}) \leq \Psi_A(\xi) = \xi(\Sigma_p^+ \times A) \\
&\leq&  \sup_{\{\mu\,\in\, \mathcal{M}_{\mathcal{F}_G} \colon \pi_*\mu=\mathbb{P}\}}\,\mu(\Sigma_p^+\times A) = \alpha_\mathbb{P}(A)
\end{eqnarray*}
so $\Psi_A(\xi)=\alpha_\mathbb{P}(A).$
\end{proof}

We are now ready to compare the rates of visits with the size of the visited set.

\begin{proposition}\label{prop:hitting}
Let $\mathbb{P}$ be a $\sigma$-invariant probability measure on $\Sigma_p^+$. Then:
\begin{enumerate}
\item $\alpha_\mathbb{P}(A) \le \gamma_\mathbb{P} (A) \le \gamma(A)$ for every measurable set $A\subset X$.
\item If $\mathbb{P}$ is ergodic, one has:
\begin{enumerate}
\item For every $x\in X$, there exists an $\cF_G$-invariant probability measure $\mu_{\mathbb{P},x}$ such that $\pi_*(\mu_{\mathbb{P},x})=\mathbb{P}$ and, for every closed set $A\subset X$,
	$\gamma_{\omega,x}(A) \le \mu_{\mathbb{P},x}(\Sigma_p^+ \times A).$
\item $\gamma_{\mathbb{P}}(A) = \alpha_\mathbb{P}(A)$ for every closed set $A \subset X$.
\end{enumerate}
\end{enumerate}
\end{proposition}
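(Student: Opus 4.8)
The plan is to exploit the key reformulation that $f_\omega^i(x)\in A$ precisely when $\cF_G^i(\omega,x)\in\Sigma_p^+\times A$, so that
\[
\gamma_{\omega,x}(A)=\limsup_{n\to+\infty}\frac1n\sum_{i=0}^{n-1}\chi_{\Sigma_p^+\times A}\big(\cF_G^i(\omega,x)\big);
\]
that is, the random hitting frequencies are upper Birkhoff averages of $\chi_{\Sigma_p^+\times A}$ for the skew product, and the whole argument then consists in passing back and forth between such averages and $\cF_G$-invariant measures with first marginal $\mathbb{P}$. For item~(1), the inequality $\gamma_\mathbb{P}(A)\le\gamma(A)$ is immediate, since an essential supremum never exceeds the corresponding supremum. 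For $\alpha_\mathbb{P}(A)\le\gamma_\mathbb{P}(A)$ I would fix $\mu\in\mathcal M_{\cF_G}$ with $\pi_*\mu=\mathbb{P}$ and apply Birkhoff's ergodic theorem to $\chi_{\Sigma_p^+\times A}$ and $(\cF_G,\mu)$: for $\mu$-almost every $(\omega,x)$ the averages above converge, so $\gamma_{\omega,x}(A)$ equals the Birkhoff limit $\tilde\chi(\omega,x)$ and $\int\tilde\chi\,d\mu=\mu(\Sigma_p^+\times A)$. Since $\omega\mapsto\sup_x\gamma_{\omega,x}(A)$ lies below $\gamma_\mathbb{P}(A)$ off a $\mathbb{P}$-null set and $\pi_*\mu=\mathbb{P}$, this forces $\tilde\chi(\omega,x)=\gamma_{\omega,x}(A)\le\gamma_\mathbb{P}(A)$ for $\mu$-almost every $(\omega,x)$; integrating gives $\mu(\Sigma_p^+\times A)\le\gamma_\mathbb{P}(A)$, and taking the supremum over admissible $\mu$ finishes item~(1). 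Note that no measurability of $\omega\mapsto\sup_x\gamma_{\omega,x}(A)$ is used here, only that a function is dominated almost everywhere by its essential supremum.

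For item~(2a), assume $\mathbb{P}$ is ergodic and fix a closed set $A\subset X$ and a point $x\in X$. By ergodicity of $\mathbb{P}$, Birkhoff's theorem provides a $\mathbb{P}$-full set $\Omega_0\subset\Sigma_p^+$ with $\frac1n\sum_{i=0}^{n-1}\delta_{\sigma^i(\omega)}\to\mathbb{P}$ in the weak$^*$ topology for every $\omega\in\Omega_0$. Fixing $\omega\in\Omega_0$, I would form the empirical measures $\nu_n=\frac1n\sum_{i=0}^{n-1}\delta_{\cF_G^i(\omega,x)}$ on the compact metric space $\Sigma_p^+\times X$, choose $n_k\to+\infty$ realizing the limsup that defines $\gamma_{\omega,x}(A)$, i.e.\ with $\nu_{n_k}(\Sigma_p^+\times A)\to\gamma_{\omega,x}(A)$, and pass to a further subsequence so that $\nu_{n_k}\to\mu_{\mathbb{P},x}$ weak$^*$. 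The standard telescoping estimate $\|\nu_n-(\cF_G)_*\nu_n\|\le 2/n$ shows that $\mu_{\mathbb{P},x}$ is $\cF_G$-invariant, while $\pi_*\nu_{n_k}=\frac1{n_k}\sum_{i<n_k}\delta_{\sigma^i(\omega)}\to\mathbb{P}$ forces $\pi_*\mu_{\mathbb{P},x}=\mathbb{P}$; and since $\Sigma_p^+\times A$ is closed, the Portmanteau theorem gives $\mu_{\mathbb{P},x}(\Sigma_p^+\times A)\ge\limsup_k\nu_{n_k}(\Sigma_p^+\times A)=\gamma_{\omega,x}(A)$, which is the asserted inequality. (The measure built this way depends in general also on $A$, so the claim is to be read with the closed set $A$ fixed beforehand, which is exactly how it is invoked next.)

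Item~(2b) then follows quickly. One inequality is item~(1). Conversely, fix a closed set $A\subset X$; by item~(2a), for each $\omega\in\Omega_0$ and each $x$ there is a $\cF_G$-invariant measure with first marginal $\mathbb{P}$ assigning $\Sigma_p^+\times A$ mass at least $\gamma_{\omega,x}(A)$, and such a measure competes in the supremum defining $\alpha_\mathbb{P}(A)$. Hence $\gamma_{\omega,x}(A)\le\alpha_\mathbb{P}(A)$ for all $\omega\in\Omega_0$ and all $x$, so $\sup_x\gamma_{\omega,x}(A)\le\alpha_\mathbb{P}(A)$ on the $\mathbb{P}$-full set $\Omega_0$, and therefore $\gamma_\mathbb{P}(A)\le\alpha_\mathbb{P}(A)$. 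Combined with item~(1), this yields $\gamma_\mathbb{P}(A)=\alpha_\mathbb{P}(A)$; by Lemma~\ref{lemma:usc} this common value is moreover attained by an ergodic $\cF_G$-invariant measure projecting to $\mathbb{P}$.

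I expect the main obstacle to be item~(2a): the subsequence has to be chosen so that the empirical measures converge \emph{and} so that it extracts the limsup defining $\gamma_{\omega,x}(A)$, and one must check that the first marginal of the weak$^*$ limit is exactly $\mathbb{P}$ — which is precisely where the ergodicity of $\mathbb{P}$ enters, whereas in item~(1) the condition $\pi_*\mu=\mathbb{P}$ is imposed outright. The remaining point, purely bookkeeping, is that $\omega\mapsto\sup_x\gamma_{\omega,x}(A)$ need not be Borel; this is harmless, since the argument only uses that such a function is dominated almost everywhere by its essential supremum.
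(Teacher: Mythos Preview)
Your proof is correct and follows essentially the same route as the paper: Birkhoff's theorem for item~(1), and empirical measures along a limsup-realizing subsequence together with the Portmanteau inequality for closed sets in item~(2a). The only minor variation is that in item~(1) the paper restricts to ergodic $\mu$ and then invokes the ergodic decomposition to pass to the general supremum, whereas you work directly with an arbitrary $\mu\in\mathcal M_{\cF_G}$ via the (non-constant) Birkhoff limit $\tilde\chi$ and integrate; your version is slightly more direct and avoids the decomposition step. Your remark that the measure in~(2a) actually depends on the fixed closed set $A$ is apt --- the paper's proof has the same feature, and indeed only the $A$-dependent version is needed for~(2b).
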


\begin{proof} Consider a measurable set $A \subset X$. The inequality $\gamma_\mathbb{P} (A)\le \gamma(A)$ is immediate. Conversely, if $\mu$ is an $\cF_G$-invariant and ergodic probability measure on $\Sigma_p^+ \times X$, then it follows from Birkhoff's Ergodic Theorem that
$$
\lim_{n\,\to\,+\infty}\frac{\sharp\,\{0\leq i\leq n-1: f_\omega^i(x)\in A\}}{n} = \lim_{n\,\to\,+\infty}\frac{\sharp\,\{0\leq i\leq n-1: \cF_G^i(\omega,x)\in \Sigma_p^+ \times A\}}{n} = \mu(\Sigma_p^+\times A)
$$
for $\mu$-almost every $(\omega,x)$. Thus, taking the supremum and the essential supremum in the first term of the previous equalities, we conclude that
$$
\gamma_\mathbb{P}(A)= \mathbb{P}-\text{esssup}\,\sup_{x\,\in\, X}\, \limsup_{n\,\to\,+\infty}\,\frac{\sharp\,\{0\leq i\leq n-1: f_\omega^i(x)\in A\}}{n} \ge \mu(\Sigma_p^+\times A).
$$
This proves (1) since $\mu$ is an arbitrary ergodic measure and, as a consequence of the Ergodic Decomposition Theorem, we have 
$$
\sup_{\{\mu\,\in\, \mathcal{M}_{\mathcal{F}_G} \colon \pi_*\mu=\mathbb{P}\}}\,\mu(\Sigma_p^+\times A)
	= \sup_{\{\mu\,\in\, \mathcal{M}_{\mathcal{F}_G} \colon \pi_*\mu=\mathbb{P} \; \& \; \text{$\mu$ is ergodic} \}}\,\mu(\Sigma_p^+\times A).
$$

We proceed to prove (2). Assume that $\mathbb{P}$ is ergodic and take a point $\omega\in \Sigma_p^+$ in the ergodic basin of the measure $\mathbb{P}$
$$\mathcal{B}(\mathbb{P})= \{\omega \in \Sigma^+_p \colon \lim_{n\,\to\,+\infty}\,\frac{1}{n}\,\sum_{j=0}^{n-1}\, \delta_{\sigma^j(\omega)} = \mathbb{P}\}.$$
Fix $x\in X$ and a closed set $A\subset X$. From Definition~\ref{def:hit}~\eqref{eq:gex}, we may find a subsequence $(n_k=n_k(x))_{k \,\in\, \mathbb{N}}$ going to $+\infty$ and such that
$\gamma_{\omega,x}(A) = \lim_{k\,\to\,+\infty}\,\frac{1}{n_k}\,\sharp\,\{0\leq i\leq n_k-1 : f_\omega^i(x)\in A\}.$
By compactness of the set of probability measures on the Borel subsets of $\Sigma_p^+ \times X$, the sequence of measures $(\mu_k)_{k\,\ge\, 1}$ given by
$ \mu_k:=\frac1{n_k} \,\sum_{i=0}^{n_k-1} \,\delta_{\cF_G^i(\omega,x)}$
admits a weak$^*$ convergent subsequence to some $\cF_G$-invariant probability measure $\mu_{\mathbb{P},x}$. Assume, without loss of generality, that $\mu_{\mathbb{P},x}=\lim_{k\,\to\,+\infty} \,\mu_k$. By the continuity of the projection $\pi_*$ and the choice of $\omega$, one has
$$
 \pi_*(\mu_{\mathbb{P},x})= \lim_{k\,\to\,+\infty} \, \pi_* \big(\frac1{n_k} \, \sum_{i=0}^{n_k-1} \, \delta_{\cF_G^i(\omega,x)} \big)
 	= \lim_{k\,\to\,+\infty}\,  \frac1{n_k} \,\sum_{i=0}^{n_k-1} \, \delta_{\sigma^i(\omega)}
	= \mathbb{P}.
 $$
Moreover, as $\Sigma_p^+\times A$ is a closed set and $(\mu_k)_{k \,\in\, \mathbb{N}}$
is weak$^*$ convergent to $\mu_{\mathbb{P},x}$, we get
$$
\gamma_{\omega,x}(A) = \lim_{k\,\to\,+\infty}\,\frac{\, \sharp\, \{0\leq i\leq n_k-1:f_\omega^i(x)\in A\}}{n_k}
	= \lim_{k\,\to\,+\infty}\, \mu_k(\Sigma_p^+\times A) \le \mu_{\mathbb{P},x}(\Sigma_p^+\times A).
$$
This ends the proof of the item (2(a)) and also implies that
$$\gamma_{\mathbb{P}}(A)= \mathbb{P}-\text{esssup}\,\sup_{x\,\in\, X} \,\,\gamma_{\omega,x}(A) \le \mu_{\mathbb{P},x}(\Sigma_p^+\times A)$$
for every closed set $A \subset X$. Finally, notice that items (1) and (2(a)) together yield the equality $\gamma_{\mathbb{P}}(A) = \alpha_\mathbb{P}(A)$ for every closed set $A \subset X$.
\end{proof}

To complete the proof of Theorem~\ref{Thm:E} we have just to assemble the statements of Lemma~\ref{lemma:usc} and (2(b)) of Proposition~\ref{prop:hitting}, and take the marginal $(\pi_X)_*(\mu_A)$.


\medskip

\begin{example}
Let $g_1: [0,1] \to [0,1]$ and $g_2: [0,1] \to [0,1]$ be the maps given by $g_1(x)=4x\,(1-x)$ and $g_2(x)= 2x\mod 1$ and consider the continuous semigroup $G$ generated by $G_1=\{Id, g_1, g_2\}$. Given $n \in \mathbb{N}$, take $\ell_1 \in \left[\sin\,\frac{\pi}{2(2^n+1)}, \sin\,\frac{\pi}{2(2^{n-1}+1)}\right)$
and $\ell_2 \in \left[\frac{1}{2^n+1},\frac{1}{2^{n-1}+1}\right)$.
By Theorems 3 and 5 of \cite{Je}, if $A_{\ell_i}=[\frac{1-\ell_i}{2},\frac{1+\ell_i}{2}]$, then the maximum hitting frequency for the map $g_i$ is equal to $\gamma_i(A_{\ell_i})=\frac{1}{n}$ ($i=1,2$). Moreover, there are periodic points $z_1, z_2\in [0,1]$ 
with period $n$ and $2n$ for $g_1$ and $g_2$, respectively, whose orbits hit $A_{\ell_i}$ with maximum frequency ($i=1,2$).

Let $\mathbb P=\frac{1}{3} \delta_{\bar 1}+\frac{2}{3} \delta_{\bar 2}$ and $\mu=\frac{1}{3n}\left(\sum_{j=1}^{n}\delta_{\mathcal{F}_G^j(\overline{1},z_\alpha)}+\sum_{j=1}^{2n}\delta_{\mathcal{F}_G^j(\overline{2},z_\beta)}\right)$. Then $\mu$ is $\mathcal{F}_G$-invariant and, although $\mathbb P$ is not ergodic, we have $\pi_\ast \mu=\mathbb P$. Besides, by \cite{Je}, for all $\ell \in \left[\frac{1}{2^n+1},\frac{1}{2^{n-1}+1}\right)\,\cap \,[\sin\,\frac{\pi}{2(2^n+1)}, \sin\,\frac{\pi}{2(2^{n-1}+1)})$, we obtain $ \gamma_{\mathbb P}(A_\ell)=\frac{1}{n}=\alpha_{\mathbb P}(A_\ell)$.
\end{example}

\medskip

\subsection*{Acknowledgements}
MC has been financially supported by CMUP (UID/MAT/00144/2013), which is funded by FCT (Portugal) with national (MEC) and European structural funds (FEDER) under the partnership agreement PT2020. FR and PV were partially supported by BREUDS. PV has also benefited from a fellowship awarded by CNPq-Brazil and is grateful to the Faculty of Sciences of the University of Porto for the excellent research conditions. The authors are grateful to Jerome Rousseau for his valuable comments.

\end{document}